\documentclass[reqno]{amsart}
\usepackage[utf8]{inputenc}
\usepackage{amsmath}
\usepackage[shortlabels]{enumitem}
\usepackage {amssymb}
\usepackage{amsfonts}
\usepackage{amsthm}
\usepackage{color}
\usepackage{natbib}
\usepackage{graphicx}
\usepackage{mathtools}
\usepackage{MnSymbol}
\usepackage{hyperref}
\usepackage{algorithm}
\usepackage{caption}
\usepackage{subcaption}
\usepackage{float}
\usepackage[noend]{algpseudocode}
\hypersetup{colorlinks,citecolor=blue,urlcolor=blue,linkcolor=blue}

\newtheorem{theorem} {Theorem} 
\newtheorem{lemma}{Lemma}

\newtheorem{proposition} {Proposition} 
\newtheorem{corol} {Corollary} 
\newtheorem*{theorem-non}{Theorem}

\theoremstyle{remark}

\newtheoremstyle{named}{}{}{\itshape}{}{}{.}{.5em}{#1 #3}
\theoremstyle{named}


\newcommand{\abs}[1]{\left\lvert#1\right\rvert}
\newcommand{\Norm}[1]{\left\lVert#1\right\rVert}




 \newcommand\omicron{o}

\DeclarePairedDelimiter{\norm}{\lVert}{\rVert}

\author{Panagiotis Lolas}
\date{}

\title{Regularization in High-Dimensional Regression and Classification via Random Matrix Theory}

\begin{document}
\maketitle

\begin{abstract}
We study general singular value shrinkage estimators in high-dimensional regression and classification, when the number of features and the sample size both grow proportionally to infinity. We allow models with general covariance matrices that include a large class of data generating distributions. As far as the implications of our results are concerned, we find exact asymptotic formulas for both the training and test errors in regression models fitted by gradient descent, which provides theoretical insights for early stopping as a regularization method. In addition, we propose a numerical method based on the empirical spectra of covariance matrices for the optimal eigenvalue shrinkage classifier in linear discriminant analysis. Finally, we derive optimal estimators for the dense mean vectors of high-dimensional distributions. Throughout our analysis we rely on recent advances in random matrix theory and develop further results of independent mathematical interest.
\end{abstract}

\tableofcontents

\section{Introduction}

In recent years scientists in many different disciplines have access to very high-dimensional data sets. The abundance of data together with the significant increase in computing power has allowed them to perform analyses that would have been impossible some decades ago. On the other hand, it is known that classical statistical theory does not provide accurate explanations of the performance of the methods used when the data dimension increases to infinity (see, for example, \cite{bai_book}). As a result, serious effort has been put in the last years by statisticians to develop theory that better captures the relevant aspects in this setting and suggest better procedures for estimation, testing and prediction. Many of the methods that have  successfully been used in practice rely on incorporating prior knowledge to the estimation procedure. A common hypothesis in many applications is that each predictor variable has a small effect on the outcome, a statement that will be made precise later. A widely used method in this scenario is ridge regression and ridge-regularized discriminant analysis. Those methods were studied in detail in \cite{wager}, where the authors provide exact asymptotic formulas for the limiting out-of-sample predictive risk. In this paper, we vastly extend their results to more general shrinkage methods.


For the case of regression the main assumption, which was also used in \cite{wager}, is that we observe i.i.d. data points $(x_i,y_i)\in\mathbb{R}^p\times\mathbb{R}$ from a linear model $y_i=w^\intercal x_i+\varepsilon_i$. We will use the notation $X=(x_1,\cdots,x_n)\in\mathbb{R}^{p\times n}.$ To model the assumption that each predictor variable has a small effect on the outcome the authors suggested the so-called Random Regression Coefficient Hypothesis (RRC), namely that the coefficent vector $w$ has i.i.d. coordinates with mean 0 and variance ${\alpha^2}{p^{-1}}$. We study estimates based on general shrinkage of the spectrum of the predictor matrix of the form $\hat{w}=\sum h(\lambda_i)u_iv_i^\intercal n^{-{1}/{2}}{y},$ where $\sum \sqrt{\lambda_i}u_iv_i^\intercal$ is the singular-value-decomposition of $n^{-{1}/{2}}{X}.$ These include as special cases the ridge regression estimate, but also the estimated coefficient vector after any step of descent-based optimization schemes. The extensions we provide here have important implications about early stopping in gradient descent-trained models and the interplay between the regularization parameter and the optimal stopping time in training. Early stopping has been used by practitioners in the deep learning community for a while (see \cite{bengio} for an explanation) as a regularization method. In many cases it has been observed that instead of fully minimizing the loss function in a regression or classification task, it is better for out-of-sample predictions to stop the decent optimization algorithm after fewer iterations. In our models this will become apparent for under-regularized ridge-regression with exact asymptotic formulas. For the case of simple linear regression with identity covariance matrix and no $l_2$ regularization parameter the learning curves have been studied in \cite{learning_curves}. For general covariance matrix asymptotic limits are derived in \cite{ali2018continuous}, but their formulas are rather intractable. Here we are going to get as a byproduct of our main theorems a more general analysis of the problem under an arbitrary covariance matrix of the predictor variables and the inclusion of an $l_2-$ penalty and provide explicit formulas for the limits for a more general class of estimators. We are also going to verify that no singular value shrinkage method can perform better than ridge regression with the Bayes optimal regularization parameter as a simple corollary of Theorem \ref{main_1}. A common assumption that allows us to use the asymptotic results of random matrix theory is that we are working in a regime with $p$-dimensional features and $n$ data points such that $p,n\rightarrow{\infty},{p}n^{-1}\rightarrow{\gamma>0}$. This assumption will be of great importance throughout this paper. In addition, we assume that the spectral distribution of the population covariance matrix $\Sigma$ of the predictor variables converges weakly to a deterministic limiting spectral measure $H$ supported on $[0,\infty).$ The model will be explained in detail in Section \ref{linear}.

For the case of classification using linear discriminant analysis in the dense setting that we consider in this paper, to the best of our knowledge, not much work has been done that departs from the ridge-regularized method proposed in \cite{wager}. The main assumption is that we have observed data points from two Gaussian distributions in $\mathbb{R}^p$ with $\mathcal{N}(\delta,\Sigma),\mathcal{N}(-\delta,\Sigma),$ where the coordinates of $\delta$ are i.i.d. with mean zero and variance ${\alpha^2}{p^{-1}}$. We then estimate $\delta,\hat{\Sigma}$ from the data and use them to classify new data points based on Bayes' rule. Since one main problem with the performance of discriminant analysis in high-dimensions is the noise in the estimation of the covariance matrix, one might attempt to perform eigenvalue shrinkage to the empirical covariance matrix before using it for a classification task in the same way as the authors do for covariance estimation in \cite{lw_linear}, \cite{lw_2012} and \cite{lw_direct}. Usually this is done by doing the spectral decomposition $\hat{\Sigma}=\sum_{i=1}^p \hat{\lambda_i}u_iu_i^\intercal$ and then using a function $h$ to shrink the eigenvalues and produce the estimate $h(\hat{\Sigma})=\sum_{i=1}^ph(\hat{\lambda_i})u_iu_i^\intercal$. Two questions arise naturally in discriminant analysis, if one uses such a method. The first one concerns the asymptotic prediction accuracy and the second one concerns whether it is possible to improve the performance by selecting a novel shrinkage method, since optimality of shrinkage functions has only been studied (as we will see below) for specific losses that do not include the classification error of discriminant analysis. Our theorems in Section \ref{DiscriminantAnalysis} will answer both questions later on. For this it will be necessary to prove convergence of certain trace functionals involving both the true and empirical covariance matrices of the predictor variables, a result of independent mathematical interest.

\subsection{Our Contributions}
We derive results of independent mathematical interest for the convergence of some trace functionals that include both the population and the sample covariance matrices in Proposition \ref{main_1}. This will be particularly useful for the proof of our main theorem (Theorem \ref{main_1}), which gives the asymptotic prediction risk for a large class of estimators for the coefficient vector $w$, but also has other important applications that we discuss. For instance, we explain in Corollary \ref{lw_opt_shrink} how it can be used to recover the optimal shrinkage function for covariance estimation in Frobenius norm derived heuristically in \cite{ledoit2011eigenvectors} and used in \cite{lw_2012}. Another interesting application is related to optimal shrinkage for estimation of means of high-dimensional distributions. This is done in Proposition \ref{prop:mean_delta}, where we suggest an empirical Bayes estimator with strong theoretical motivation. Moreover, Theorem \ref{main_1} implies the optimality of ridge regression with a suitable parameter among all singular value shrinkage-based methods. Finally, Theorem \ref{main_1} can be used to prove Proposition \ref{ridge_asympt}, which describes the test error evolution in gradient descent training. This gives us important insights about the early stopping regularization method used in neural networks. In particular, even in this very simplistic setting the regime we are studying explains some attributes of the overtraining phenomenon seen in much more complex models. 

For classification we prove Proposition \ref{Prop_LDA} as the main tool for our analysis. We use it to derive asymptotic formulas for the classification error of high-dimensional linear discriminant analysis under general nonlinear shrinkage functions for covariance estimation in Theorem \ref{main_discr}. After that, we study an optimization problem that gives the optimal shrinkage function for classification and explain how the optimum behaves for different values of signal-to-noise ratio. We also study a relaxation of the problem that can be solved analytically and show that a combination of the shrinkage for covariance estimation in \cite{lw_2012} and ridge-regularization is close to optimal in the sense that it provides the solution to this relaxation. This important observation can be used to approximate the optimal shrinkage at the same computational cost as tuning a ridge parameter for LDA. Before the development of the tools that we present here this analysis was impossible for general nonlinear shrinkage functions. Proposition \ref{Prop_LDA} that gives exact asymptotics for $\lim_{p\rightarrow{\infty}}p^{-1}tr\left(\Sigma h(\hat{\Sigma})\Sigma h(\hat{\Sigma})\right)$ for any continuous function $h,$ where $\Sigma$ is the population covariance matrix and $\hat{\Sigma}$ is the sample covariance matrix was particularly useful for this purpose.

\subsection{Related Work}

In the same direction as \cite{wager}, the authors in \cite{hastie2019surprises} study least-norm high-dimensional regression in a setting similar to ours. They also consider random kernel features interpolators and study the asymptotic risk. For that purpose the predictor variables are generated as $x_i=\phi(Wz_i),z_i\sim \mathcal{N}(0,I_d),$ where $W\in\mathbb{R}^{p\times d}$ is a matrix with i.i.d. entries with distribution $\mathcal{N}(0,d^{-1})$ and $d$ is an integer that grows proportionally to $p,n.$ This corresponds to the linearization of a two-layer neural network with random weights in the first layer. Interesting behaviour arises with this model for minimum-norm regression due to the double descent shape of the risk curve. Random features regression is also studied in \cite{montanari_mei} for learning an unknown function over the $d$-dimensional sphere via ridge regression.

As mentioned earlier, incorporating prior knowledge is usually extremely important for extending statistical methods to high dimensions. A different set of hypotheses are related to sparsity, according to which only a relatively small number of parameters in a regression or classification task are nonzero (\cite{hastie_tibsh_wain}). One of the first papers that showed that even in the extremely high-dimensional regime with $p>>n$ it is possible, under certain assumptions, to recover (with high probability) the coefficient vector was \cite{candes_tao}. Other papers in the vast literature in this direction are, for example, \cite{bayati}, where the authors derive the asymptotic risk for LASSO, or  \cite{montanari_donoho}, where the authors study the asymptotic variance of M-estimators. 

As far as high-dimensional classification is concerned, in \cite{bickel2004some} it is explained that classification using the full sample covariance matrix gives poor performance and a Naive Bayes classifier is suggested instead. In \cite{fan2008high} the authors argue that such independence rules may not be enough and suggest that feature selection rules are necessary. In \cite{bickel2008covariance} the closely related problem of covariance estimation is studied and thresholding-based methods are examined. For high-dimensional logistic regression, which is another commonly used classification method, high-dimensional phenomena are studied in \cite{candes2018phase},\cite{candes_sur}, \cite{candes_sur_1}. The authors consider extremely important questions such as the existence and asymptotic distribution of the Maximum-Likelihood Estimator for logistic regression when the number of features grows proportionally to the sample size. The asymptotic loss for a large class of binary classifiers (including logistic regression and maximum margin classifiers) in the high-dimensional regime considered here was studied in \cite{taheri2020sharp}.

Eigenvalue shrinkage and thresholding methods have been used in solving many problems. First of all, shrinkage methods for covariance estimation have been studied previously by many researchers. The authors in \cite{lw_linear} proposed a well-conditioned shrinkage estimator for the empirical covariance matrix based on a linear shrinkage method. In \cite{lw_2012} the authors extended their method to the study the optimal (with respect to some loss) rotation invariant nonlinear shrinkage estimator for a covariance matrix using results from random matrix theory and in \cite{lw_direct} kernel estimation was used to improve the numerical stability and speed of the optimal nonlinear shrinkage procedure. Finally, optimal shrinkage functions for a spiked covariance model for 26 different loss functions were obtained (for most of them analytically) in \cite{johnstone}. A closely related application in symmetric matrix denoising via singular value thresholding can be found in \cite{donoho_smd} and \cite{gavish2014optimal}.

\subsection{Organization of the Paper}
In Section \ref{prelim} we review some useful results from random matrix theory. Some of those results have been used in \cite{wager}, but here we will be required to develop more mathematical tools to derive our results. This is done, for example, in Proposition \ref{gen_lp}, which generalizes the results of \cite{ledoit2011eigenvectors}, or in Proposition \ref{modif_lda}. In  Section \ref{linear} we derive a general formula for the asymptotics of out-of-sample predictive risk in regularized linear regression. As mentioned earlier we will apply those results to understand early stopping and study the training and test error evolution. In Section \ref{DiscriminantAnalysis} we carry out a similar study for Discriminant Analysis. Furthermore, we explain how to numerically solve for the optimal shrinkage function. Finally, we summarize our most important results and propose some directions that might be useful to explore in the future in Section \ref{future}. Proofs of more technical results can be found in Section \ref{app}.

\bigskip
\textbf{Acknowledgements}

The authors are grateful to Lexing Ying, Emmanuel Candès and Nikolaos Ignatiadis for providing many useful comments 
that helped improve earlier versions of this paper.

\section{Results from Random Matrix Theory}\label{prelim}

We review some basic results about the asymptotics of the eigenvalue distributions of random matrices.

For a probability measure $\mu $ supported on the real line the Stieltjes transform is defined as $m_{\mu}(z)=\int (x-z)^{-1}{\mu(dx)}$ for $z\in\mathbb{C}$ away from the support of $\mu$.

The spectral distribution of a Hermitian $p\times p$ matrix $A$ with eigenvalues (in decreasing order) $\lambda_1(z), \cdots,\lambda_p(A)$ is defined as the measure $F_A(x)=p^{-1}\sum_{i=1}^p\delta_{\lambda_i(A)}$. This is, of course, a measure supported on the real line. For empirical covariance matrices the spectral disribution is characterized asymptotically by the Marcenko-Pastur theorem. In the general form presented below it can be found in \cite{silverstein1995empirical}.

\begin{theorem-non}
Let $Z\in\mathbb{R}^{p\times n}$ be a matrix with i.i.d. mean 0 and variance 1 entries and $\Sigma\in\mathbb{R}^{p\times p}$ a covariance matrix that is either deterministic or random but independent of $Z$ and $X=\Sigma^{{1}/{2}}Z.$ We assume that the spectral distribution of $\Sigma$  converges weakly to a deterministic probability measure $H$ supported on $[0,\infty)$. If $p,n\rightarrow{\infty}$ such that ${p}{n^{-1}}\rightarrow{\gamma>0}$, then the spectral distribution of the matrix $n^{-1}{XX^\intercal}$ converges weakly almost surely to a deterministic probability measure $F_{\gamma,H}$ with Stieltjes transform that satisfies $$m_{\gamma,H}(z)=\int \frac{dH(t)}{t(1-\gamma-\gamma z m_{\gamma,H}(z))-z}.$$
\end{theorem-non}

We will often omit the subscripts and just write $m$ for the Stieltjes transform, since it will be clear from our setting what the subscripts correspond to. In many cases it is useful to work with the companion Stieltjes transform which is defined as $\underline{m}_{\gamma,H}(z)=-{(1-\gamma)}{z^{-1}}+\gamma m_{\gamma,H}(z)$ and corresponds to the Stieltjes transform of the limiting spectral distribution of $n^{-1}{X^\intercal X}$. For any $x\in \mathbb{R}-\{0\}$ \cite{silver_choi} proved that the limit $\displaystyle \lim_{\epsilon\downarrow 0}\underline{m}_{\gamma,H}(x+i\epsilon)= \Tilde{m}_{\gamma,H}(x)=f(x)+ig(x)$ exists. They also proved that $F$ has a continuous density away from $0$ given by $F'={Im(\Tilde{m})}{(\gamma\pi)^{-1}}={g}(\gamma\pi)^{-1}$.

This theorem has far-reaching implications in statistics. The references \cite{johnstone2006high} and \cite{paul2014random} provide interesting reviews. 

For the case $H=\delta_1$ the equation for the Stieltjes transform is quadratic and can be solved explicitly.  For other measures $H$ it is possible to solve numerically for the limiting spectral density, as explained in detail in \cite{dobriban2015efficient}. The author also provides software implementations of the methods.

A generalization of the Marcenko-Pastur equation is given in \cite{ledoit2011eigenvectors} and we mention an implication of their main result below. For the rest of this Section we are going to use the notation $S_n=n^{-1}{XX^\intercal}.$

\begin{theorem-non}\label{ledoit_peche}
With the assumptions above, if the entries of $Z$ have uniformly bounded $12$-th  moments, $H$ has  support contained in a compact interval $[h_1,h_2]$ and all the eigenvalues of $\Sigma$ eventually lie in $[h_1,h_2]$, then for any bounded continuous function $u$ and any $z\in\mathbb{C^+}$ we have \begin{equation}\label{lp}\frac{1}{p}tr\left(u(\Sigma) \left(S_n-z\right)^{-1}\right)\xrightarrow{a.s.}\int \frac{u(t)dH(t)}{t(1-\gamma-\gamma z m_{\gamma,H}(z))-z}.\end{equation} For $g(t)=1$ we recover the Marcenko-Pastur equation.
\end{theorem-non}

The authors used this result to study the overlap of the eigenvectors of $n^{-1}{XX^\intercal}$ with their population counterparts. Another significant application of this result in statistics was the nonlinear shrinkage method for covariance estimation mentioned earlier. Furthermore, it was used by \cite{wager} in their analysis of the asymptotics of ridge regression and ridge-regularized linear discriminant analysis.

An extension that we are going to use is proved below. This is an important result of independent mathematical interest. For our purposes it is going to be the main tool in providing limits for regularized high-dimensional linear regression risk. \textit{To avoid technical complications arising from the fact that the density of the Marcenko-Pastur can be unbounded when $\gamma=1,$ we are going to assume for the rest of the paper that $\gamma\neq 1.$}

\begin{proposition}\label{gen_lp}

With the same assumptions as above for any bounded continuous function $h$ on $[0,\infty)$ we have $$\frac{1}{p}tr\left(\Sigma h\left(S_n\right)\right)\xrightarrow{a.s.}M_{\gamma,H}(h),$$ where $$M_{\gamma,H}(h)=\int h(x) \frac{g(x)}{\pi\gamma x(f(x)^2+g(x)^2)}dx+\frac{h(0)}{\gamma\underline{m}(0)}I_{\gamma>1}.$$

\end{proposition}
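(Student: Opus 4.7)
The plan is to realize the trace as an integral against a positive weighted spectral measure of $S_n$ and identify its almost sure weak limit via Stieltjes transform techniques. Let $S_n = \sum_{i=1}^p \lambda_i u_i u_i^\intercal$ be the spectral decomposition and define the random positive measures
$$\mu_n := \frac{1}{p}\sum_{i=1}^p (u_i^\intercal \Sigma u_i)\,\delta_{\lambda_i},$$
so that $\int h\, d\mu_n = p^{-1}\operatorname{tr}(\Sigma h(S_n))$ for any continuous $h$. Observe that $\mu_n(\mathbb{R}) = p^{-1}\operatorname{tr}(\Sigma) \to \int t\, dH(t)$ almost surely, and since the eigenvalues of $\Sigma$ and of $S_n$ eventually lie in compact sets, the $\mu_n$ are supported in a common compact set for large $n$. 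It therefore suffices to find the almost sure weak limit $\mu$ of the $\mu_n$ and then integrate $h$ against it.

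To find that limit I would first compute the Stieltjes transform of $\mu_n$, which equals $G_{\mu_n}(z) = p^{-1}\operatorname{tr}(\Sigma(S_n - z)^{-1})$. Applying the Ledoit-Péché theorem quoted above to a bounded continuous function that coincides with $u(t) = t$ on $[h_1,h_2]$ (legitimate because the spectrum of $\Sigma$ lies there eventually) yields
$$G_{\mu_n}(z) \xrightarrow{a.s.} \int \frac{t\, dH(t)}{t\left(1-\gamma-\gamma z m_{\gamma,H}(z)\right) - z}.$$
A short algebraic manipulation using the identities $1-\gamma - \gamma z m_{\gamma,H}(z) = -z\underline{m}_{\gamma,H}(z)$ and $1 + z m_{\gamma,H}(z) = \gamma^{-1}(1 + z\underline{m}_{\gamma,H}(z))$, together with the Marcenko-Pastur defining equation used to rewrite $\int t\, dH(t)/(tc(z) - z) = (1 + zm_{\gamma,H}(z))/c(z)$ where $c(z)=1-\gamma-\gamma z m_{\gamma,H}(z)$, collapses the right-hand side to
$$G(z) \;=\; -\frac{1}{\gamma z\,\underline{m}_{\gamma,H}(z)} - \frac{1}{\gamma}, \quad z \in \mathbb{C}^+.$$
Pointwise convergence of Stieltjes transforms on $\mathbb{C}^+$, combined with tightness of $\{\mu_n\}$ and injectivity of the Stieltjes transform on finite positive measures, promotes this to almost sure weak convergence $\mu_n \Rightarrow \mu$, where $\mu$ is the unique positive measure with Stieltjes transform $G$.

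The remaining step is to read off $\mu$ by Stieltjes inversion. Using the Silverstein-Choi boundary limit $\underline{m}_{\gamma,H}(x + i 0^+) = f(x) + ig(x)$ and the factorization $1/(f + ig) = (f - ig)/(f^2 + g^2)$, a direct computation produces the density
$$\pi^{-1}\operatorname{Im}G(x + i 0^+) \;=\; \frac{g(x)}{\pi\gamma x(f(x)^2 + g(x)^2)}$$
on $\mathbb{R}\setminus\{0\}$. For a possible atom at $0$ I would evaluate $\lim_{y\downarrow 0}(-iy)G(iy) = 1/(\gamma\,\underline{m}_{\gamma,H}(0))$; this limit is finite and yields a genuine atom precisely when $\gamma > 1$, since then the $(1-\gamma)/z$ singularity in the definition of $\underline{m}_{\gamma,H}$ is cancelled by the atom of $m_{\gamma,H}$ at $0$ generated by the rank deficiency of $S_n$, whereas for $\gamma < 1$ the point $0$ lies outside $\operatorname{supp}(F_{\gamma,H})$ and contributes nothing. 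Integrating $h$ against the resulting $\mu$ then returns exactly $M_{\gamma,H}(h)$. The main technical delicacy I anticipate is the boundary analysis at $z = 0$, where the dichotomy $\gamma \lessgtr 1$ must be handled carefully; once it is settled, the algebraic reductions and the passage from Stieltjes to weak convergence are essentially routine.
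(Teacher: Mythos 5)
Your proposal is correct and recovers the same key identity, $p^{-1}\operatorname{tr}\bigl(\Sigma(S_n-z)^{-1}\bigr)\to -\frac{1+z\underline{m}(z)}{\gamma z\underline{m}(z)}$, via Ledoit--P\'ech\'e and the Marcenko--Pastur equation, but the mechanism for passing from resolvents to general $h$ differs from the paper's. You package the trace as $\int h\,d\mu_n$ against the $\Sigma$-weighted spectral measure $\mu_n=p^{-1}\sum (u_i^\intercal\Sigma u_i)\delta_{\lambda_i}$, whose Stieltjes transform is exactly the Ledoit--P\'ech\'e resolvent; pointwise a.s.\ convergence of Stieltjes transforms (on a countable dense set of $\mathbb{C}^+$, then by equicontinuity), together with uniform compact support and convergence of total mass, gives a.s.\ weak convergence of $\mu_n$ by Helly plus uniqueness, after which Stieltjes inversion produces the density $g/(\pi\gamma x(f^2+g^2))$ and the atom $1/(\gamma\underline{m}(0))$ at $0$ when $\gamma>1$. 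The paper instead reduces bounded continuous $h$ to polynomials by Stone--Weierstrass, writes $p^{-1}\operatorname{tr}(\Sigma h(S_n))$ as a Cauchy contour integral of the resolvent trace, argues uniform a.s.\ convergence on the contour via a Lipschitz bound, and then collapses the contour to the real axis (adding a small circle at $0$ when $\gamma>1$). The two routes compute the same boundary limit; yours sidesteps the polynomial-density approximation and the contour-uniformity argument by absorbing them into standard facts about weak convergence of uniformly compactly supported positive measures, at the cost of having to verify those facts carefully. One small point worth tightening: the claim that $0\notin\operatorname{supp}(F_{\gamma,H})$ when $\gamma<1$ uses that $H$ is bounded away from $0$; the cleaner justification for the absence of an atom in that regime, which you essentially also give, is that $z\underline{m}(z)\to\gamma-1\neq 0$ as $z\to 0$, so $(-iy)G(iy)\to 0$.
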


\begin{proof}

We start the proof assuming $\gamma< 1$, but in the end we are going to show how to take into consideration the different behaviour that occurs for $\gamma>1$.

First of all, by a simple density argument, namely that polynomials on a compact set are dense in the uniform convergence topology in the space of continuous functions, it is enough to prove the result for polynomial functions $h$. We will prove it for holomorphic functions in general using Cauchy's integral formula. The fact that we can restrict our attention to a compact subset of $[0,\infty)$ follows from the inequality  $$\limsup{\Norm{S_n}}\leq \limsup{\Norm{\Sigma} {\Norm{\frac{ZZ^\intercal}{n}}}}\leq h_2(1+\sqrt{\gamma})^2.$$ Here we used the fact that $\limsup{\Norm{n^{-1}{ZZ^\intercal}}}=(1+\sqrt{\gamma})^2$ by \cite{bai1998no}.

We observe that $1+z\underline{m}(z)=\gamma(1+zm(z))$.

By \eqref{lp} we have
\begin{equation}\label{contour_formula}\begin{split}
\frac{1}{p}tr\left(\Sigma\left(S_n-z\right)^{-1}\right)\xrightarrow{a.s.}\int \frac{t}{t(1-\gamma-\gamma z m(z))-z}d H(t)\\
=\frac{1}{1-\gamma-\gamma z m(z)}\int (1+\frac{z}{t(1-\gamma-\gamma z m(z))-z})d H(t)\\
=\frac{1}{1-\gamma-\gamma z m(z)}(1+z m(z))=\frac{1}{\gamma}\frac{1+z\underline{m}(z)}{1-1-z\underline{m}(z)}=-\frac{1+z\underline{m}(z)}{\gamma z \underline{m}(z)}.
\end{split}
\end{equation}

Let $\Gamma$ be a simple closed curve that encloses counterclockwise the support of $F_{\gamma,H}$.

We write \begin{equation}\label{ocnt}\begin{split}
\frac{1}{p}tr\left(\Sigma h\left(S_n\right)\right)
\end{split}=-\frac{1}{2\pi i}\oint_{\Gamma}h(w)\frac{1}{p}tr\left(\Sigma \left(S_n-w\right)^{-1}\right)dw
\end{equation}

We clearly have that $\displaystyle \Norm{\left(S_n-w\right)^{-1}}$ is uniformly bounded almost surely for $w\in \Gamma$. In addition, $$\abs{\frac{d}{dw}\frac{1}{p}tr\left(\Sigma \left(S_n-w\right)^{-1}\right)}\leq \Norm{\Sigma}\norm{\left(S_n-w\right)^{-2}}.$$
 As a consequence, the sequence of functions $p^{-1}tr\left(\Sigma \left(S_n-w\right)^{-1}\right)$ almost surely contains functions that are uniformly Lipschitz on $\Gamma$. We conclude that almost surely the convergence in \eqref{contour_formula} is uniform for $w\in\Gamma$. By the bounded convergence theorem and combining \eqref{contour_formula} and \eqref{ocnt} it follows that \begin{equation}\begin{split}
\frac{1}{p}tr\left(\Sigma h\left(S_n\right)\right)\xrightarrow{a.s.}\frac{1}{2\pi i}\oint_{\Gamma} h(w)\frac{1+w\underline{m}(w)}{\gamma w\underline{m}(w)} dw
\end{split}
\end{equation}

We need to convert the contour integral to a real integral. To do this, we follow the idea of \cite{bai2008clt} and consider the curve $\Gamma$ to approximate an interval on the real line in both directions. Then, $$\lim_{\epsilon\rightarrow{0}^+}\frac{1+(x-i\epsilon)\underline{m}(x-i\epsilon)}{\gamma (x-i\epsilon)\underline{m}(x-i\epsilon)}-\frac{1+(x+i\epsilon)\underline{m}(x+i\epsilon)}{\gamma (x+i\epsilon)\underline{m}(x+i\epsilon)}$$
$$=-2i\lim_{\epsilon\downarrow 0}Im\left(\frac{1+(x+i\epsilon)\underline{m}(x+i\epsilon)}{\gamma (x+i\epsilon)\underline{m}(x+i\epsilon)}\right)=2i\frac{g(x)}{\gamma x(f(x)^2+g(x)^2)}, $$
as a simple calculation shows.

To adjust the proof for the case $\gamma> 1,$ we see that the only difference is that $\lim_{z\rightarrow{0}}z\underline{m}(z)=0$, so that we need to substitute the curve $\Gamma$ with two other curves, one (call it $\Gamma_1$) contained in $\{z\in\mathbb{C}:Re(z)> 0\}$ enclosing counterclockwise the support of the limiting spectral distribution $F_{\gamma,H}$ in the positive real axis, and the other (call it $\Gamma_2$) being a small circle around $0$ with radius $r$. 

Then, for the first curve the argument is the same as above and shows convergence to the same term exactly. For $\Gamma_2$ we have \begin{equation}\begin{split}\lim_{r\downarrow{0}}\frac{1}{2\pi i}\oint_{\Gamma_2} h(w)\frac{1+w\underline{m}(w)}{\gamma w\underline{m}(w)}dw
=\lim_{r\downarrow{0}}\frac{1}{2\pi i}\oint_{\Gamma_2}\frac{h(w)}{\gamma w
\underline{m}(w)}dw\\
=\lim_{r\downarrow{0}}\frac{1}{2\pi}\int_{0}^{2\pi}\frac{h(r\exp(i\theta))}{\gamma \underline{m}({r\exp(i\theta)})}d\theta
=\frac{h(0)}{\gamma \underline{m}(0)}.\end{split}
\end{equation}

This completes the proof.

\end{proof}

\textbf{Remarks}

If $\Sigma=I$ we have $f(x)^2+g(x)^2=x^{-1}$ and the result is equivalent to the original Marcenko-Pastur theorem. In addition, if we consider functionals of the form $p^{-1}tr(f_1(\Sigma)f_2(S_n))$, it is straightforward to derive an analogous formula.


As an application of Proposition \ref{gen_lp} that illustrates how it can be useful, we observe that one can recover the optimal nonlinear shrinkage function of \cite{ledoit2011eigenvectors}. There it was motivated heuristically as an asymptotic equivalent to an oracle that is the optimal rotation invariant estimator of the covariance matrix. We present this here from a different perspective, since a similar method will be applied in the Discriminant Analysis case to find the optimal shrinkage function there (numerically, since the formulas will be much more complicated).

\begin{corol}\label{lw_opt_shrink}
If $\gamma\neq 1$, among the bounded continuous functions on an open set $U$ that contains the support of $F_{\gamma,H}$, the one that minimizes the asymptotic (mean squared) Frobenius loss $\displaystyle \lim_{p\rightarrow{\infty}}p^{-1}\Norm{\Sigma-h\left(S_n\right)}_{F}^2$ satisfies $$h(x)=\frac{1}{x (f(x)^2+g(x)^2)},x\in supp(F_{\gamma,H}),x>0.$$ If $x=0$ and $\gamma>1,$ the optimal function at $0$ evaluates to $$h(0)=\frac{1}{(\gamma-1)\underline{m}(0)}.$$
\end{corol}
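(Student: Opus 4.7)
The plan is to expand the squared Frobenius loss and analyze each term separately via Proposition \ref{gen_lp} and the Marcenko--Pastur theorem. Writing
\begin{align*}
p^{-1}\Norm{\Sigma-h(S_n)}_F^2 = p^{-1}\operatorname{tr}(\Sigma^2) - 2\, p^{-1}\operatorname{tr}\!\left(\Sigma\, h(S_n)\right) + p^{-1}\operatorname{tr}\!\left(h(S_n)^2\right),
\end{align*}
the first term is independent of $h$, the second converges almost surely to $-2 M_{\gamma,H}(h)$ by Proposition \ref{gen_lp}, and the third equals $\int h^2\, dF_{S_n}$, which by bounded continuity of $h^2$ on a neighborhood of the spectrum of $S_n$ and weak convergence $F_{S_n}\Rightarrow F_{\gamma,H}$ converges almost surely to $\int h^2\, dF_{\gamma,H}$.

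Next I would unpack $\int h^2\, dF_{\gamma,H}$ using that the limiting spectral measure has continuous density $g(x)/(\gamma\pi)$ on the positive part of its support together with an atom at $0$ of mass $1-\gamma^{-1}$ when $\gamma>1$ (coming from the rank deficiency of $S_n$, whose null space has dimension at least $p-n$). Combined with the explicit form of $M_{\gamma,H}(h)$, the asymptotic loss becomes, up to an additive constant independent of $h$,
\begin{align*}
\mathcal{L}(h) &= \int \left[\frac{g(x)}{\gamma\pi}\, h(x)^2 \;-\; \frac{2\, g(x)}{\pi\gamma\, x\,(f(x)^2+g(x)^2)}\, h(x)\right] dx \\
&\quad + \left[(1-\gamma^{-1})\, h(0)^2 \;-\; \frac{2\, h(0)}{\gamma\, \underline{m}(0)}\right] I_{\gamma>1}.
\end{align*}

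Because $\mathcal{L}$ is a quadratic functional that decouples pointwise and is strictly convex at every point where the coefficient of $h(x)^2$ is positive, the minimization reduces to an elementary pointwise calculation. Setting the derivative in $h(x)$ to zero at an arbitrary $x>0$ in the continuous part of the support yields $h(x) = 1/\!\left(x(f(x)^2+g(x)^2)\right)$, while setting the derivative in $h(0)$ to zero when $\gamma>1$ and simplifying $1/(\gamma\underline{m}(0)(1-\gamma^{-1}))$ gives $h(0) = 1/\!\left((\gamma-1)\underline{m}(0)\right)$. Off the support the value of $h$ plays no role in $\mathcal{L}$, so one is free to extend the prescribed $h^\star$ to all of $U$ as a bounded continuous function.

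The main point requiring care is justifying the pointwise minimization as a genuine minimum over bounded continuous functions on $U$: one must check that $h^\star(x)=1/(x(f(x)^2+g(x)^2))$ defined on the support admits a bounded continuous extension. On the interior of the continuous part of the support this is immediate from the continuity of $f,g$ established in \cite{silver_choi} and from $g>0$ there; at the spectral edges one uses the standard regularity of the companion Stieltjes transform near the edges of a Marcenko--Pastur-type support to see that $h^\star$ has finite one-sided limits (and in the case $\gamma>1$ the atom at $0$ is decoupled from the rest of the spectrum, so its optimal value is handled separately). Once this continuity is in place, the continuous extension of $h^\star$ achieves the pointwise infimum of the integrand and the atomic term simultaneously, hence is globally optimal.
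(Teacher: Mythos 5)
Your proposal is correct and follows essentially the same route as the paper: expand $p^{-1}\Norm{\Sigma-h(S_n)}_F^2$ into a constant term, a cross term handled by Proposition \ref{gen_lp}, and a quadratic term handled by the Marcenko--Pastur theorem, then decompose $F_{\gamma,H}$ into its continuous density $g/(\gamma\pi)$ plus the atom $\max(1-\gamma^{-1},0)\delta_0$ and minimize pointwise in $h(x)$ and $h(0)$. The only difference is that you devote more care to the boundedness and continuity of the extension of $h^\star$ beyond the support, which the paper dismisses in a single line via partitions of unity.
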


\begin{proof}

We will use the well-known fact that $$dF_{\gamma,H}(x)=\frac{g(x)}{\pi \gamma}dx+\max(1-\gamma^{-1},0)\delta_0.$$
The limit of the Frobenius loss is $$\int t^2dH(t)+\int h(x)^2dF_{\gamma,H}(x)-2\int h(x)\frac{g(x)}{\pi\gamma x(f(x)^2+g(x)^2)}dx-2\frac{h(0)}{\gamma \underline{m}(0)}I_{\gamma>1}$$

$$=\int t^2dH(t)+\int \left(h(x)^2-2h(x)\frac{1}{x(f(x)^2+g(x)^2)}\right)\frac{g(x)}{\pi \gamma}dx$$ $$+\left((1-\gamma^{-1})h(0)^2-2\frac{h(0)}{\gamma \underline{m}(0)}\right)I_{\gamma>1}.$$

For $\gamma<1$, we see that the integrand is minimized for $\displaystyle h(x)=(x(f(x)^2+g(x)^2))^{-1},$ which is going to be continuous close to the support of $F_{\gamma,H}$. For $\gamma>1,$ this is still true away from 0, but at $0$ we need to minimize $$\left((1-\gamma^{-1})h(0)^2-2\frac{h(0)}{\gamma \underline{m}(0)}\right).$$ This happens for $h(0)=(\gamma-1)^{-1}\underline{m}(0)^{-1}$. The fact that we there exists such a continuous function $h$ is trivial (for example, usually constructed by partitions of unity as in \cite{munkres}). This completes our proof.

\end{proof}





\section{High-Dimensional Linear Regression}\label{linear}

\textbf{Regression Model and Assumptions} 

\textit{Data-Generating Distribution}

Suppose that we observe $n$ predictor variables $x_1,\cdots,x_n\in\mathbb{R}^p$ which are i.i.d. and there exist i.i.d. $z_1,\cdots,z_n\in\mathbb{R}^p$ with entries of mean 0, variance 1 and uniformly bounded $12$-th moments such that $x_i=\Sigma^{{1}/{2}}z_i.$ In addition, we assume that $p,n\rightarrow{\infty}$ with ${p}n^{-1}\rightarrow{\gamma>0}$. As in the previous section, $\Sigma$ is the population covariance matrix, which has bounded norm and spectral distribution that converges weakly to a probability measure $H$ compactly supported on $[0,\infty)$. The target variables $y_1,\cdots,y_n$ are generated by a linear model as $y_i=w^\intercal x_i+\varepsilon_i.$ As in the previous Section we are going to use the notation $S_n=n^{-1}{XX^\intercal}.$

\bigskip

\textit{Weight and error distributions}

The error terms are i.i.d. with mean 0, variance 1 and uniformly bounded $(4+\eta)$-th moments (for some $\eta>0$), while the weight vector $w$ has independent components with mean zero and variance $p^{-1}{\alpha^2}.$ This is a way to describe the hypothesis discussed in the introduction that each predictor variable has a small effect on the outcome. We also assume that the normalized coordinates $\sqrt{p}w_i$ of the weight vector have uniformly bounded $(4+\eta)$-th moments. The same assumptions are made in \cite{wager}. \bigskip

\textit{Out-of-sample risk}

The out-of-sample prediction risk is defined as $\mathbb{E}[(y_0-\hat{w}^\intercal x_0)^2|X,y,w]=1+\norm{\Sigma^{{1}/{2}}(\hat{w}-w)}^2,$ where $(x_0,y_0)$ is generated independently from the training sample by the same model.
\bigskip

\textit{Estimators that we consider}

We write $X=(x_1,\cdots,x_n)\in \mathbb{R}^{p\times n},y=(y_1,\cdots,y_n)^\intercal.$  When we perform linear regression of $X$ on $y$ (assume for a moment that ${XX^\intercal}$ is invertible) we estimate the weights by $\hat{w}=({XX^\intercal})^{-1}{X y}.$

If $n^{-{1}/{2}}{X}=\sum_{i=1}^p\sqrt{\lambda_i}u_iv_i^\intercal$ is the singular value decomposition of ${n}^{-1/2}{X}$, we can rewrite $\hat{w}=\sum_{i=1}^p{(\sqrt{n\lambda_i})^{-1}}u_iv_i^\intercal {y}.$ Similarly, ridge regression with parameter $\lambda\geq 0$ gives $\hat{w}_{\lambda}=\sum_{i=1}^p{\sqrt{n^{-1}\lambda_i}}{(\lambda_i+\lambda)^{-1}}u_iv_i^\intercal {y}.$ Thus, ridge regression can be treated as a shrinkage method for the singular values of $X$ before performing a linear regression. More examples of this phenomenon will be studied later. 

Motivated by this we look at estimators of the form $$\hat{w}=\sum_{i=1}^ph({\lambda_i})u_iv_i^\intercal \frac{y}{\sqrt{n}},$$ where $h$ is a bounded continuous function in an open interval containing the support of $F_{\gamma,H}$.

Our first important result is below. The functions $f,g$ are defined as in Section \ref{prelim}. 

\begin{theorem}\label{main_1}
The out-of-sample prediction risk under the assumptions described above converges almost surely to $$1+\int [\alpha^2(\sqrt{x}h(x)-1)^2+\gamma h(x)^2]\frac{g(x)}{\gamma\pi x (f(x)^2+g(x)^2)}dx+\frac{\alpha^2+\gamma h(0)^2}{\gamma\underline{m}(0)}I_{\gamma >1}.$$
\end{theorem}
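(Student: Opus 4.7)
The plan is to reduce the risk $1+\|\Sigma^{1/2}(\hat w-w)\|^2$ to two trace functionals of the form covered by Proposition \ref{gen_lp}, plus a vanishing cross term.

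First I would use the SVD $n^{-1/2}X=\sum_i\sqrt{\lambda_i}u_iv_i^\intercal$ and the identity $v_i^\intercal n^{-1/2}X^\intercal=\sqrt{\lambda_i}u_i^\intercal$ to rewrite the estimator. Setting $\phi(x)=\sqrt{x}\,h(x)$ and $B=\sum_i h(\lambda_i)u_iv_i^\intercal$, the model $y=X^\intercal w+\varepsilon$ gives
\begin{equation*}
\hat w-w=\bigl(\phi(S_n)-I\bigr)w+\tfrac{1}{\sqrt{n}}B\varepsilon.
\end{equation*}
Expanding the quadratic form $\|\Sigma^{1/2}(\hat w-w)\|^2$ yields three pieces:
\begin{align*}
T_1&=w^\intercal(\phi(S_n)-I)\Sigma(\phi(S_n)-I)w,\\
T_2&=\tfrac{2}{\sqrt n}w^\intercal(\phi(S_n)-I)\Sigma B\varepsilon,\\
T_3&=\tfrac{1}{n}\varepsilon^\intercal B^\intercal\Sigma B\varepsilon.
\end{align*}

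Next I would compute conditional expectations given $X$. Since $w$ has independent mean-zero coordinates of variance $\alpha^2/p$, $\mathbb{E}[T_1\mid X]=\frac{\alpha^2}{p}\operatorname{tr}\bigl(\Sigma(\phi(S_n)-I)^2\bigr)$. For $T_3$, since $\varepsilon$ has i.i.d.\ mean-zero unit-variance components and $BB^\intercal=\sum_ih(\lambda_i)^2u_iu_i^\intercal=h(S_n)^2$, one gets $\mathbb{E}[T_3\mid X]=\frac{1}{n}\operatorname{tr}\bigl(\Sigma h(S_n)^2\bigr)=\frac{p}{n}\cdot\frac{1}{p}\operatorname{tr}\bigl(\Sigma h(S_n)^2\bigr)$. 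By the operator-norm bound $\|S_n\|\le h_2(1+\sqrt\gamma)^2+o(1)$ a.s.\ and boundedness of $h$ on a neighborhood of the support of $F_{\gamma,H}$, the spectral functions are uniformly bounded, so Proposition \ref{gen_lp} applied to the continuous functions $(\sqrt{x}\,h(x)-1)^2$ and $h(x)^2$ gives
\begin{align*}
\mathbb{E}[T_1\mid X]&\xrightarrow{a.s.}\alpha^2 M_{\gamma,H}\bigl((\sqrt x\,h(x)-1)^2\bigr),\\
\mathbb{E}[T_3\mid X]&\xrightarrow{a.s.}\gamma\, M_{\gamma,H}\bigl(h(x)^2\bigr),
\end{align*}
and after noting $(\sqrt{0}\,h(0)-1)^2=1$ these combine to the claimed integral plus the stated atom at $0$ when $\gamma>1$ (where $\alpha^2/(\gamma\underline m(0))+\gamma h(0)^2/(\gamma\underline m(0))=(\alpha^2+\gamma h(0)^2)/(\gamma\underline m(0))$).

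What remains are concentration statements: $T_i-\mathbb{E}[T_i\mid X]\to 0$ a.s.\ for $i=1,3$ and $T_2\to 0$ a.s. For $T_1$ and $T_3$ this is the standard variance-of-a-quadratic-form bound: if $\xi$ has independent entries with variance $\sigma^2$ and uniformly bounded $(4+\eta)$ moments, then $\operatorname{Var}(\xi^\intercal M\xi)\le C\sigma^4\|M\|_F^2+C\sigma^4\sum_i M_{ii}^2$ times a constant depending on the moment bound; in our case $\|M\|_F^2=O(p)$ so the conditional variance is $O(1/p)$, and a Borel–Cantelli / moment-based argument (using the $(4+\eta)$-moment hypothesis to get a polynomially decaying tail bound along a sufficiently dense subsequence and interpolating) upgrades $L^2$ convergence to almost sure. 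The cross term $T_2$ has $\mathbb{E}[T_2\mid X,w]=0$ and conditional second moment controlled by $\frac{4}{n}\|w\|^2\|\Sigma\|^2\|\phi(S_n)-I\|^2\|h(S_n)\|^2=O(1/n)$ a.s., so the same argument makes it vanish a.s.

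The main obstacle I anticipate is the almost-sure (rather than in-probability) upgrade of these concentration bounds, because the outer randomness comes from three independent sources $(X,w,\varepsilon)$ and one must pass the a.s.\ limit of Proposition \ref{gen_lp} (which holds on a full-measure set of $X$) through the conditional concentration of $w$ and $\varepsilon$. The standard remedy is to condition on the full-measure event from Proposition \ref{gen_lp} and the operator-norm bound, then apply Chebyshev with the quantitative variance bounds above together with the $(4+\eta)$-moment hypothesis to get a summable tail, concluding via Borel–Cantelli on the joint probability space.
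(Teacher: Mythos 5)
Your decomposition is exactly the paper's, up to notation: your $\phi(S_n)$ and $B$ are the paper's $M_1$ and $M_2$, and the identification $\mathbb{E}[T_3\mid X]=\tfrac{1}{n}\operatorname{tr}(\Sigma h(S_n)^2)$ via $BB^\intercal=h(S_n)^2$ is the same manipulation. The reduction of the risk to $M_{\gamma,H}\bigl((\sqrt{x}h(x)-1)^2\bigr)+\gamma M_{\gamma,H}(h^2)$ via Proposition \ref{gen_lp} is also exactly what the paper does, and your bookkeeping of the atom at $0$ when $\gamma>1$ is correct.

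The one place where your argument is weaker than the paper's, and where as written it has a genuine gap, is the almost-sure concentration step. You invoke a \emph{variance} bound $\operatorname{Var}(\xi^\intercal M\xi\mid X)=O(1/p)$; Chebyshev then gives tail $O(1/(p\epsilon^2))$, which is not summable, so Borel--Cantelli does not apply directly. Your remedy is a subsequence-and-interpolation argument, but that trick requires some monotonicity or a common coupling across $p$ that lets you interpolate between the sparse indices; here the random vectors $w,\varepsilon$ and the matrix $X$ are drawn fresh at each $(p,n)$, so there is no natural path to interpolate along. The paper avoids this by not stopping at the variance: it applies a Marcinkiewicz--Zygmund-type moment inequality (Lemma \ref{lp_lemma}) to obtain, conditionally on $X$, an $L^{q}$ bound with $q=(4+\eta)/2>2$, namely $\mathbb{E}\bigl[|T_1-\mathbb{E}[T_1\mid X]|^{q}\mid X\bigr]=O(p^{-q/2})=O(p^{-1-\eta/4})$, uniformly on the full-measure event where $\limsup\|S_n\|$ is bounded. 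Markov then gives a summable tail and Borel--Cantelli finishes, with no subsequence argument needed. So the correct use of the $(4+\eta)$-moment hypothesis is to boost the \emph{order} of the moment bound past $2$, not to justify interpolation between subsequence points; you should replace the variance-plus-subsequence reasoning with this direct higher-moment route. With that change, your proof matches the paper's.
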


The proof is given in Section \ref{app}. The two most important steps at the proof are to first control the deviations of the quadratic form in the out-of-sample risk around the expectation using a consequence of the Marcinkiewicz-Zygmund inequality and then derive asymptotic formulas for the expectations of the terms that appear using Proposition \ref{gen_lp}.
\bigskip

\textbf{Remark} Observe that the function $g$ is nonnegative, as ${(\pi \gamma)^{-1}}g$ is the density of the generalized Marcenko-Pastur distribution $F_{\gamma,H}.$ The function $\alpha^2(\sqrt{x}h(x)-1)^2+\gamma h(x)^2$ is quadratic in $h$ and is minimized when $\displaystyle h(x)={\sqrt{x}}(x+{\gamma}{\alpha^{-2})^{-1}}.$ This shrinkage function corresponds to ridge regression with parameter $\lambda^*={\gamma}{\alpha^{-2}}.$ One intuitive explanation for why this should be true is the following: From the Bayesian perspective, if we impose a normal prior distribution on the coefficient vector $w$ and the residual vector $\varepsilon$, after observing the data points $(x_i,y_i)$ the posterior of $w$ is going to be normal. The posterior mean is exactly the ridge regression estimate with the parameter above. In the case of a general prior distribution on $w$ the optimal shrinkage function should not change asymptotically, since the limit arising in Theorem \ref{main_1} is universal. In particular, we never made assumptions about the strict shape of the distribution of the coefficients except for the first two moments, when deriving the asymptotic formula of the error. Choosing the optimal parameter $\lambda^*$ may not be a straightforward task when $p>n$ and there has been significant amount of work in the literature dedicated to this question and the closely related heritability problem. For instance, the reader can refer to \cite{janson2017eigenprism} and \cite{dicker2016maximum} and the references therein.

\subsection{Learning Curves in Ridge Regression}

In the introduction we argued that Theorem \ref{main_1} will allow us to study the learning curves for high-dimensional regression for any covariance matrix $\Sigma$ and any regularization parameter $\lambda$ trained by gradient descent, which we do here.

Before we present our result, we need to describe the dynamics of descent in linear regression. The weights $\hat{w}(t),$ which we initialize at $0$ are iteratively updated to minimize over $\beta\in\mathbb{R}^p$ the function $$\frac{\norm{y-X^\intercal \beta}}{2n}+\frac{\lambda\norm{\beta}^{2}}{2}.$$ Taking small steps against the gradient of this function with step size $dt$ gives $$\hat{w}(t+dt)=\hat{w}(t)-dt\left(\frac{X(X^\intercal \hat{w}-y)}{n}+\lambda\hat{w}\right).$$ Assuming $dt\rightarrow{0}$ we get an ordinary differential equation for $\hat{w}(t)$ described by $$\frac{d\hat{w}}{dt}=-\left(\frac{XX^\intercal}{n}+\lambda\right)\hat{w}+\frac{Xy}{n}\Rightarrow{\hat{w}(t)=\left(I-\exp{\left(-t\left(S_n+\lambda\right)\right)}\right)}\left(S_n+\lambda\right)^{-1}\frac{Xy}{n}.$$
Notice that $\lim_{t\rightarrow{\infty}}\hat{w}(t)$ recovers the usual weight for ridge regression. We conclude that Theorem \ref{main_1} provides asymptotic formulas for the out-of-sample prediction risk at all points in time during the training. In particular, early stopping provides regularization via singular value shrinkage of $n^{-1/2}{X}.$ Using the notation we introduced above, we can apply our results for $h(x)=(1-\exp{(-t(x+\lambda))}){\sqrt{x}}(x+\lambda)^{-1}.$

\begin{proposition}\label{ridge_asympt}
The prediction risk using gradient descent for time $t$ in ridge regression with regularization parameter $\lambda$ converges almost surely to $$1+\int [\alpha^2(\sqrt{x}h(x)-1)^2+\gamma h(x)^2]\frac{g(x)}{\gamma\pi x (f(x)^2+g(x)^2)}dx+\frac{\alpha^2}{\gamma \underline{m}(0)}I_{\gamma>1},$$ where $h=h(x;t,\lambda)=(1-\exp{(-t(x+\lambda))}){\sqrt{x}}(x+\lambda)^{-1}.$
\end{proposition}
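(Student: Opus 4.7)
The closed form for $\hat{w}(t)$ derived just before the proposition already expresses the gradient-descent iterate as a function of $S_n$ acting on $Xy/n$, so the task reduces to rewriting it in the singular value shrinkage form required by Theorem \ref{main_1} and identifying the associated function $h$. Using the SVD $n^{-1/2}X=\sum_i\sqrt{\lambda_i}u_iv_i^\intercal$, both $S_n+\lambda I$ and $\exp(-t(S_n+\lambda I))$ are simultaneously diagonal in the orthonormal basis $\{u_i\}$, so $(I-\exp(-t(S_n+\lambda I)))(S_n+\lambda I)^{-1}=\sum_i\frac{1-\exp(-t(\lambda_i+\lambda))}{\lambda_i+\lambda}u_iu_i^\intercal$. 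Since $Xy/n=n^{-1/2}\sum_j\sqrt{\lambda_j}(v_j^\intercal y)u_j$, orthonormality of the $u_i$'s collapses the resulting double sum into $\hat{w}(t)=\sum_i h(\lambda_i;t,\lambda)u_iv_i^\intercal y/\sqrt{n}$ with exactly $h(x;t,\lambda)=(1-\exp(-t(x+\lambda)))\sqrt{x}/(x+\lambda)$, the function appearing in the statement.

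The remaining hypothesis of Theorem \ref{main_1} is that this $h(\cdot;t,\lambda)$ be bounded and continuous on an open neighborhood of the support of $F_{\gamma,H}$. The support is a compact subset of $[0,\infty)$ since $\Sigma$ has bounded norm, $h$ is smooth on $(0,\infty)$, and $h$ decays like $x^{-1/2}$ at infinity, so boundedness is clear. The only delicate point is the behaviour at $x=0$, which matters when $\gamma>1$: if $\lambda>0$ then $h(0;t,\lambda)=0$ trivially, while if $\lambda=0$ the expansion $1-\exp(-tx)=tx+O(x^2)$ yields $h(x;t,0)=t\sqrt{x}+O(x^{3/2})\to 0$ as $x\downarrow 0$. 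Hence $h$ extends continuously to $[0,\infty)$ with $h(0;t,\lambda)=0$ in every case.

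Combining these observations, Theorem \ref{main_1} supplies the almost-sure limit $1+\int[\alpha^2(\sqrt{x}h(x)-1)^2+\gamma h(x)^2]\frac{g(x)}{\gamma\pi x(f(x)^2+g(x)^2)}dx+\frac{\alpha^2+\gamma h(0)^2}{\gamma\underline{m}(0)}I_{\gamma>1}$, and because $h(0;t,\lambda)=0$ the term $\gamma h(0)^2$ vanishes, producing exactly the expression in the proposition. I do not anticipate any serious obstacle here: the entire content of the proposition is the identification of the implicit shrinkage function in the gradient-flow trajectory, together with the elementary verification $h(0)=0$, which is the only mildly subtle step (especially in the unregularized case $\lambda=0$, where it depends on the $\sqrt{x}$ factor inherited from the SVD).
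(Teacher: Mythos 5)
Your proof is correct and follows exactly the route the paper takes (the paper simply states that Theorem \ref{main_1} applies with $h(x)=(1-\exp(-t(x+\lambda)))\sqrt{x}/(x+\lambda)$ and does not spell out the verification). You add the worthwhile observation that $h(0;t,\lambda)=0$ in both the $\lambda>0$ and $\lambda=0$ cases, which explains why the boundary term in Theorem \ref{main_1} reduces from $(\alpha^2+\gamma h(0)^2)/(\gamma\underline{m}(0))$ to $\alpha^2/(\gamma\underline{m}(0))$ in the proposition's statement — a detail the paper leaves implicit.
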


In Figure \ref{fig:verif_two_mas} verify experimentally that our theorem produces the right result for $p=500,n=1500,\alpha=1$ and $H=0.5(\delta_1+\delta_4).$ The regularization parameter was set to the optimal $\lambda={1}/{3}.$ In Figure \ref{fig:verif_auto_regress} we do the same for $p=1000,n=1500,\alpha=2$ an autoregressive covariance matrix with $\Sigma_{ij}=2^{-\abs{i-j}}.$ For this example we chose $\lambda=0,$ which corresponds to unregularized linear regression.
\begin{figure}[h]
\includegraphics[width=0.5\linewidth]{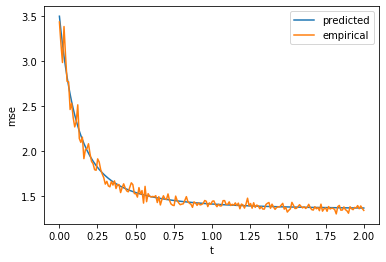}
\caption{Predicted vs Empirical out-of-sample MSE evolution for $p=500,n=1500,\alpha=1$ and $H=0.5{(\delta_1+\delta_4)}.$ Here the ridge parameter is chosen as the optimal $\lambda={1}/{3}.$}
\label{fig:verif_two_mas}
\end{figure}

\begin{figure}[h]

\includegraphics[width=0.5\linewidth]{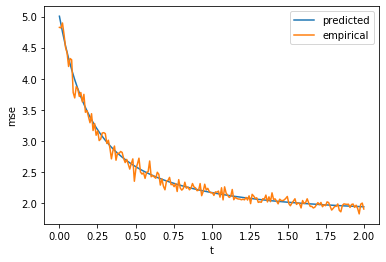}
\caption{Predicted vs Empirical out-of-sample MSE evolution for $p=1000,n=1500,\alpha=2,\lambda=0$ and $\Sigma_{ij}=2^{-\abs{i-j}}.$}
\label{fig:verif_auto_regress}
\end{figure}
\bigskip

Observe that the first term in the integral is decreasing in $t$ and roughly corresponds to a bias term, as we see from the proof. The second term in the integral is increasing in $t$ and behaves as a variance term. 

As a special case we recover a result of \cite{learning_curves} which we state in Corollary \ref{corol_advani}.

\begin{corol}\label{corol_advani}
If $\lambda=0$ (unregularized linear regression), for $\Sigma=I_p$, the evolution of the prediction risk during training is given by $$1+\int \alpha^2 \exp(-2tx)+\gamma\frac{(1-\exp{(-tx)})^2}{x}dF_{\gamma}(x).$$ Here $F_{\gamma}=F_{\gamma,\delta_1}$ is the measure on the real line that is absolutely continuous with respect to the Lebesgue measure for $\gamma\leq 1$ with density $$\frac{dF_{\gamma}(x)}{dx}=\frac{\sqrt{\left((1+\sqrt{\gamma}\right)^2-x)\left(x-(1-\sqrt{\gamma}\right)^2)}}{2\pi\gamma x},(1-\sqrt{\gamma})^2\leq x\leq (1+\sqrt{\gamma})^2.$$ If $\gamma>1,$ $F_{\gamma}$ has an additional mass $1-\gamma^{-1}$ at 0. 

\end{corol}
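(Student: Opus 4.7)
The plan is to deduce Corollary \ref{corol_advani} as a direct specialization of Proposition \ref{ridge_asympt} by setting $\lambda=0$ and $\Sigma = I_p$ (equivalently $H=\delta_1$), and then identifying the resulting integrand and endpoint contribution with the standard Marcenko-Pastur parametrization.

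First, with $\lambda=0$ the shrinkage function $h(x;t,0)=(1-e^{-tx})\sqrt{x}/x$ satisfies $\sqrt{x}h(x)-1=-e^{-tx}$ and $h(x)^2=(1-e^{-tx})^2/x$, so the bracketed factor in Proposition \ref{ridge_asympt} becomes $\alpha^2 e^{-2tx}+\gamma (1-e^{-tx})^2/x$. Next, by the remark following Proposition \ref{gen_lp}, when $\Sigma=I$ one has $f(x)^2+g(x)^2=x^{-1}$; hence the weight $g(x)/\bigl(\gamma\pi x(f(x)^2+g(x)^2)\bigr)$ simplifies to $g(x)/(\gamma\pi)$, which is exactly the density of the absolutely continuous part of $F_{\gamma}=F_{\gamma,\delta_1}$ (matching the explicit formula on $[(1-\sqrt{\gamma})^2,(1+\sqrt{\gamma})^2]$ stated in the corollary, which can be derived by solving the Marcenko-Pastur equation explicitly for $H=\delta_1$). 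This yields the integral part of the claim.

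It remains to handle the endpoint term $\alpha^2/(\gamma\underline{m}(0))\,I_{\gamma>1}$ and match it with the atom of $F_\gamma$ at $0$. Writing $\underline{m}$ for the companion Stieltjes transform associated with $H=\delta_1$, a short calculation from the Marcenko-Pastur equation $m(z)=[1-\gamma-\gamma z m(z)-z]^{-1}$ together with $\underline{m}(z)=(\gamma-1)/z+\gamma m(z)$ gives the fixed-point identity $-z\underline{m}(z)(1+\underline{m}(z))+\gamma\underline{m}(z)=1+\underline{m}(z)$; evaluating at $z=0$ produces $\underline{m}(0)=1/(\gamma-1)$. Therefore $\alpha^2/(\gamma\underline{m}(0))=\alpha^2(1-\gamma^{-1})$, which is precisely the mass at $0$ in $F_\gamma$ multiplied by the integrand $\alpha^2 e^{-2tx}+\gamma(1-e^{-tx})^2/x$ evaluated at $x=0$ (the second summand vanishes there since $(1-e^{-tx})^2/x=O(x)$).

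No real obstacle is expected here because Proposition \ref{ridge_asympt} has already done the analytical work; the corollary is essentially a symbolic simplification. The only care needed is the $\gamma>1$ boundary term: one must verify that the atomic contribution arising from the formula in Proposition \ref{ridge_asympt} matches the integration against the atom of $F_\gamma$ (this is the $\underline{m}(0)=1/(\gamma-1)$ computation above) and also that the missing $\gamma h(0)^2$ piece from Theorem \ref{main_1} is genuinely zero, which holds because $h(0)=\lim_{x\to 0}(1-e^{-tx})/\sqrt{x}=0$. Combining all of this yields exactly the statement of the corollary.
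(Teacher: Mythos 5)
Your proof is correct and follows exactly the route the paper implicitly indicates (the corollary is stated without proof as "a special case" of Proposition \ref{ridge_asympt}): you specialize $h(x;t,0)=(1-e^{-tx})/\sqrt{x}$ to get $\sqrt{x}h(x)-1=-e^{-tx}$ and $h(x)^2=(1-e^{-tx})^2/x$, use the paper's remark that $f^2+g^2=x^{-1}$ for $\Sigma=I$ to simplify the weight to $g(x)/(\gamma\pi)$ (the Marcenko--Pastur density), and correctly verify $\underline{m}(0)=1/(\gamma-1)$ for $H=\delta_1$ so that the $\gamma>1$ boundary term $\alpha^2/(\gamma\underline{m}(0))=\alpha^2(1-\gamma^{-1})$ matches integrating the bracket (which equals $\alpha^2$ at $x=0$) against the atom of $F_\gamma$. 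The remark that $h(0)=0$ (so the $\gamma h(0)^2$ piece from Theorem \ref{main_1} genuinely vanishes) is a good sanity check; nothing is missing.
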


When $\gamma$ increases the effect of the second term will become more significant and in general will require stopping earlier in the training to achieve better performance. For $\alpha=0.5$ and three different values of $\gamma$ the asymptotic prediction risk is seen in the plot below in the null case $H=\delta_1$ and with $\lambda=0$. We observe that it is optimal to stop early, since going beyond a certain point results to overtraining, which increases the prediction risk. We also see that, as we explained, the higher $\gamma$ is, the earlier the overtraining phenomenon starts to impact the performance of the model. Furthermore, if $\alpha$ increases the optimal training time always increases, keeping the other parameters fixed. In other words, the smaller the fraction of the variance explained by pure noise is, the longer the optimal training period. Overtraining becomes more rare, since the gradient descent tries to update the weights in directions governed mostly by signal and less by noise.

\begin{figure}[h]

\includegraphics[width=0.5\linewidth]{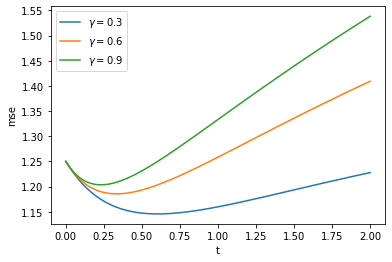}
\caption{Out-of-sample MSE evolution for $\alpha=0.5, H=\delta_1,\lambda=0$ as a function of training time.}

\end{figure}

To examine the tradeoff between $\lambda$ and the optimal stopping time, we plot for $\gamma=\alpha=0.5$ the prediction risk as a function of $t,\lambda$. In this case the optimal $\lambda$ is ${\gamma}{\alpha^{-2}}=2.$ In the plot below we see that close to $\lambda=\lambda^*=2$ training for longer times is required and leads to lower prediction risk values, but when the regularization parameter is much smaller and not close to the optimum it might be very easy to overtrain.

\begin{figure}[h]

\includegraphics[width=0.5\linewidth]{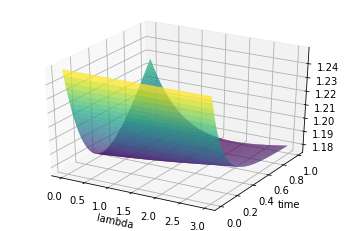}
\caption{MSE surface}
\label{null_surf}
\end{figure}

We also have the following easy consequence, which tells us that for the case of over-regularized ridge regression, under the assumptions we have made, fully training the model is optimal. In other words, the most significant gains from stopping early are for $\lambda$ much smaller than the optimal parameter. 

\begin{corol}\label{over_reg}
For $\lambda \geq {\gamma}{\alpha^{-2}}$ the out-of-sample risk of ridge regression is decreasing as a function of $t.$
\end{corol}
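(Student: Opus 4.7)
The plan is to read off the explicit $t$-dependence of the asymptotic risk from Proposition \ref{ridge_asympt} and then verify pointwise monotonicity of the integrand in $t$. Under the hypothesis $\lambda \geq \gamma/\alpha^2 > 0$, one has $h(0;t,\lambda) = 0$, so the boundary term $\alpha^2/(\gamma\underline{m}(0))\cdot I_{\gamma>1}$ is independent of $t$. By the remark immediately after Theorem \ref{main_1}, the weight $g(x)/(\gamma\pi x(f(x)^2 + g(x)^2))$ is nonnegative on the support of $F_{\gamma,H}$, so it suffices to show that the integrand
\[
\phi(x,t) := \alpha^2\bigl(\sqrt{x}\,h(x;t,\lambda) - 1\bigr)^2 + \gamma\, h(x;t,\lambda)^2
\]
is nonincreasing in $t$ for every fixed $x \geq 0$.

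The key step is to change variables to $u = u(x,t) := 1 - e^{-t(x+\lambda)} \in [0,1)$, which is strictly increasing in $t$ for each $x \geq 0$. Substituting $\sqrt{x}\,h = ux/(x+\lambda)$ and $h^2 = u^2 x/(x+\lambda)^2$ turns $\phi$ into a simple quadratic in $u$, and a short computation yields
\[
\frac{\partial \phi}{\partial u} = \frac{2x}{(x+\lambda)^2}\Bigl[u(\alpha^2 x + \gamma) - \alpha^2 (x+\lambda)\Bigr].
\]
The bracket is affine and increasing in $u$, so its supremum over $u \in [0,1)$ equals $(\alpha^2 x + \gamma) - \alpha^2(x+\lambda) = \gamma - \alpha^2\lambda$, which is $\leq 0$ precisely under the hypothesis $\lambda \geq \gamma/\alpha^2$. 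Hence $\partial\phi/\partial u \leq 0$ on $[0,1)$ for every $x \geq 0$, and the chain rule together with $\partial u/\partial t > 0$ gives $\partial\phi/\partial t \leq 0$. Integrating the pointwise inequality against the nonnegative weight gives monotonicity of the risk in $t$.

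The argument is essentially a single-variable monotonicity check, and I do not anticipate a real obstacle; the only points requiring slight care are that the boundary contribution at $x=0$ does not depend on $t$ (which relies on $\lambda > 0$, forced by the hypothesis) and that the supremum of the affine bracket over the open interval $[0,1)$ is correctly identified as the endpoint limit at $u=1$, both of which are immediate. All the analytic work has already been done by Proposition \ref{ridge_asympt}, which delivers a risk formula explicit enough to reduce the corollary to elementary calculus.
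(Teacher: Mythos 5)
Your proof is correct and follows essentially the same route as the paper: reduce to pointwise monotonicity of the integrand $\alpha^2(\sqrt{x}\,h-1)^2 + \gamma h^2$ in $t$ and verify it by a one-variable calculus check, where your substitution $u = 1 - e^{-t(x+\lambda)}$ is just a cosmetic reparametrization of the paper's direct computation of $H'(t)$. Both arguments ultimately boil down to the same elementary inequality $h(x;t,\lambda) \leq \sqrt{x}/(x + \gamma\alpha^{-2})$.
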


\begin{proof}

It is enough to show that for any $x\geq 0,\lambda\geq {\gamma}{\alpha^{-2}}$ $$H(t)=(\sqrt{x}h(x;t,\lambda)-1)^2+\frac{\gamma}{\alpha^2}h(x;t,\lambda)^2$$ is decreasing in $t,$ with $h(x;t,\lambda)$ as in Proposition \ref{ridge_asympt}.  

We have $$H'(t)=2\sqrt{x}\exp(-t(x+\lambda))[x h(x;t,\lambda)-\sqrt{x}+\frac{\gamma}{\alpha^2}h(x;t,\lambda)],$$ so it is enough to show that $h(x;t,\lambda)\leq {\sqrt{x}}(x+\frac{\gamma}{\alpha^2})^{-1}.$ This obviously holds, since $$h(x;t,\lambda)\leq \frac{\sqrt{x}}{x+\lambda}\leq \frac{\sqrt{x}}{x+\frac{\gamma}{\alpha^2}}.$$

\end{proof}

In Figure \ref{fig:ful_two_mass} and \ref{fig:full_auto_regressive} we show how early stopping can tremendously increase performance if the regularization is too small. In fact, in \cite{ali2018continuous} the authors use a coupling argument to provide bounds on optimally tuned versus early-stopped regression and show that the ratio of the risks is at most 1.22 (out-of-sample). In \cite{yao2007early} the authors also derive probabilistic upper bounds for non-parametric regression. Data-dependent stopping rules are studied in \cite{raskutti2014early}. Probabilistic upper bounds have also been studied in . It is also obvious that for $\lambda\geq \lambda^*={\gamma}{\alpha^{-2}}$ early stopping cannot influence the performance.

\bigskip
\begin{figure}[h]

\includegraphics[width=0.5\linewidth]{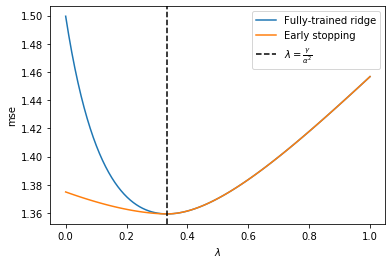}
\caption{Fully-trained ridge regression vs Early Stopping MSE as a function of the regularization parameter $\lambda.$ As in Figure \ref{fig:verif_two_mas} we chose $\gamma={1}/{3},\alpha=1$ and $H=0.5(\delta_1+\delta_4)$}
\label{fig:ful_two_mass}
\end{figure}

\begin{figure}[h]

\includegraphics[width=0.5\linewidth]{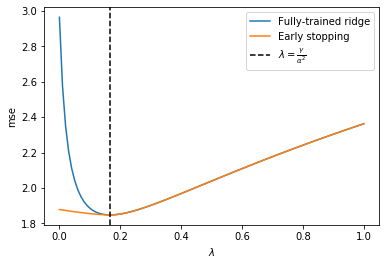}
\caption{Fully-trained ridge regression vs Early Stopping MSE as a function of the regularization parameter $\lambda.$ As in Figure \ref{fig:verif_auto_regress} we chose $\gamma={2}/{3},\alpha=2$ and $\Sigma_{ij}=2^{-\abs{i-j}}$}
\label{fig:full_auto_regressive}
\end{figure}

In Section \ref{app} we prove the analogous result for the training error evolution, which we state next. Here $\underline{F}_{\gamma,H}$ is the companion measure with Stieltjes transform $\underline{m}_{\gamma,H}.$ The main ingredient is again, as in Theorem \ref{main_1}, to control the deviations of the quadratic forms that appear around the mean using the Marcinkiewicz-Zygmund inequality. The means turn out to be simply linear spectral statistics of $S_n$ and the companion matrix $n^{-1}{X^\intercal X},$ hence we can use the Marcenko-Pastur Theorem to derive almost sure limits as $p,n\rightarrow{\infty}.$

\begin{proposition}\label{train_err}
At the point $t$ in time, the training error $E_n(\hat{w})=\displaystyle n^{-1}{\norm{y-X^\intercal \hat{w}}^{2}}$ converges almost surely to 
\begin{equation}\begin{split}
\int \left[\frac{x}{x+\lambda}(1-\exp{(-t(x+\lambda))})-1\right]^2d\underline{F}_{\gamma,H}(x)\\+\alpha^2\int x\left[\frac{x}{x+\lambda}(1-\exp{(-t(x+\lambda))})-1\right]^2 dF_{\gamma,H}(x).
\end{split}
\end{equation}

In particular, at the end of training ($t\rightarrow{\infty}$) it is $$\displaystyle\int \frac{\lambda^2}{(x+\lambda)^2}d\underline{F}_{\gamma,H}+\alpha^2\lambda^2\int \frac{x}{(x+\lambda)^2}dF_{\gamma,H}(x). $$

\end{proposition}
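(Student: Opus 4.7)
The plan is to expand $E_n(\hat{w})=n^{-1}\|y-X^\intercal \hat{w}(t)\|^{2}$, reduce it to three quadratic forms in $w$ and $\varepsilon$, and then evaluate each by a concentration step combined with a limiting linear spectral statistic. Let $\tilde{X}=X/\sqrt n$ and fix the thin SVD $\tilde{X}=U_kD_kV_k^\intercal$ with $k=\min(p,n)$ nonzero singular values, and set $\Lambda=D_k^{2}$. For the gradient-descent shrinkage function $\phi(x)=(1-e^{-t(x+\lambda)})/(x+\lambda)$, functional calculus together with a direct cancellation against the kernel of $\tilde X$ yields
$$X^\intercal\hat{w}(t)=\tilde{X}^\intercal \phi(S_n)\tilde{X}\,y=V_k\psi(\Lambda)V_k^\intercal\,y=:B(t)\,y,$$
where $\psi(x)=x\phi(x)=\frac{x}{x+\lambda}(1-e^{-t(x+\lambda)})$. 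Completing $V_k$ to an orthogonal basis of $\mathbb{R}^n$ with $V_{k,\perp}$ gives the clean orthogonal decomposition $(I_n-B(t))^{2}=V_{k,\perp}V_{k,\perp}^\intercal + V_k(I_k-\psi(\Lambda))^{2}V_k^\intercal$, and substituting $y=X^\intercal w+\varepsilon$ splits $E_n(\hat{w})$ into a noise quadratic form, a signal quadratic form, and a $w$--$\varepsilon$ cross term.

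The cross term $2n^{-1}\varepsilon^\intercal(I_n-B(t))^{2}X^\intercal w$ vanishes almost surely via a fourth-moment Marcinkiewicz--Zygmund bound, exactly as in the proof of Theorem \ref{main_1}: conditional on $X,w$ it has mean zero and fourth conditional moment controlled by $\|(I_n-B(t))^{2}X^\intercal w\|^{4}/n^{4}$, which is $O(n^{-2})$ almost surely since $\|I_n-B(t)\|\le 1+\|\psi\|_\infty$ and $n^{-1}\|X^\intercal w\|^{2}=w^\intercal S_n w$ concentrates at $\alpha^{2}\int x\,dH(x)$ under the RRC hypothesis, and Borel--Cantelli finishes the argument. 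For the noise quadratic form, the same concentration tool applied conditionally on $X$ with a deterministic coefficient matrix reduces the problem to its trace, and the projection decomposition gives
$$\frac{1}{n}\operatorname{tr}\bigl((I_n-B(t))^{2}\bigr)=\frac{n-k}{n}+\frac{1}{n}\sum_{i=1}^{k}(1-\psi(\lambda_i))^{2}=\int (1-\psi(x))^{2}\,dF^{n^{-1}X^\intercal X}(x),$$
using $\psi(0)=0$. This is a linear spectral statistic of the companion matrix, so the Marcenko--Pastur theorem gives almost sure convergence to $\int(1-\psi(x))^{2}\,d\underline{F}_{\gamma,H}(x)$.

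For the signal quadratic form, the identities $\tilde{X}V_{k,\perp}=0$ and $\tilde{X}V_k=U_kD_k$ reduce $\tilde{X}(I_n-B(t))^{2}\tilde{X}^\intercal$ to $U_k\Lambda(I_k-\psi(\Lambda))^{2}U_k^\intercal$, whose trace equals $\sum_{i=1}^{k}\lambda_i(1-\psi(\lambda_i))^{2}=p\int x(1-\psi(x))^{2}\,dF^{S_n}(x)$ (the missing zero eigenvalues contribute nothing thanks to the $x$ factor). Using $\mathbb{E}[ww^\intercal]=(\alpha^{2}/p)I_p$ and a Marcinkiewicz--Zygmund inequality, legal because $\sqrt{p}\,w_i$ has bounded $(4+\eta)$-th moment by hypothesis, the form concentrates around $(\alpha^{2}/p)\cdot p\int x(1-\psi)^{2}\,dF^{S_n}$, which by weak convergence $F^{S_n}\to F_{\gamma,H}$ tends almost surely to $\alpha^{2}\int x(1-\psi(x))^{2}\,dF_{\gamma,H}(x)$. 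Summing the three limits, and noting $(\psi-1)^{2}=(1-\psi)^{2}$, yields the formula in the proposition; letting $t\to\infty$ sends $\psi(x)\to x/(x+\lambda)$ and $\psi(x)-1\to -\lambda/(x+\lambda)$, producing the end-of-training expression.

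The main technical obstacle is the bookkeeping at $x=0$ when $\gamma>1$: $F_{\gamma,H}$ then carries a point mass $1-\gamma^{-1}$ at zero while $\underline{F}_{\gamma,H}$ is atomless there. For the noise term the companion-matrix rewriting above handles this automatically because $\psi(0)=0$ together with the identity $\underline{F}_{\gamma,H}\rvert_{(0,\infty)}=\gamma F_{\gamma,H}\rvert_{(0,\infty)}$ forces $\int(1-\psi)^{2}\,d\underline{F}_{\gamma,H}$ and $(n-k)/n+n^{-1}\sum(1-\psi(\lambda_i))^{2}$ to have the same limit in both regimes; the standing assumption $\gamma\ne 1$ rules out the boundary case where the density near zero may fail to be bounded.
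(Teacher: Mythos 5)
Your proof is correct and follows essentially the same route as the paper: expand the training error into quadratic forms in $w$ and $\varepsilon$, reduce each to its conditional trace via a Marcinkiewicz--Zygmund concentration bound, and evaluate the trace limits as linear spectral statistics of $S_n$ and $n^{-1}X^\intercal X$ via the Marcenko--Pastur theorem. The paper organizes the same algebra through explicit matrices $M_x$ and $M_\varepsilon$ built from the SVD of $X$ rather than your projection decomposition of $(I_n-B(t))^2$, but the resulting trace identities and almost-sure limits coincide.
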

\bigskip

\textbf{Remarks}

The function is decreasing in $t$ as we should expect and increasing in $\alpha.$ For $\lambda\downarrow 0$ we can recover the training error for the unregularized linear regression as $\underline{F}_{\gamma,H}(\{0\})=\max(1-\gamma,0)$ at the end of training regardless of $\Sigma$. In this case $\left(S_n+\lambda\right)^{-1}$ converges to the Moore-Penrose pseudoinverse of the empirical covariance matrix. This explains the severe overfitting phenomenon observed when the number of features exceeds the sample size. Moreover, notice that in the beginning of training, that is when $t\rightarrow{0^+},$ the prediction risk is $1+\alpha^2\int xdF_{\gamma,H}(x)$ for both the training and test sets, which is the variance of the target variable $y$. For the prediction risk, for instance, the surface in Figure \ref{null_surf} starts always close to $1.25$ when $t=0$. 

\bigskip
If we use a different optimization scheme similar estimators arise. For instance, for Nesterov's accelerated gradient method we can use the second order differential equation derived in \cite{su2014differential} to see that the evolution of weights during training is essentially the same as in gradient descent, but with an additional term that is a Bessel function of the empirical covariance matrix $S_n$. Obviously the same method can be applied to provide asymptotics.


\section{High-Dimensional Discriminant Analysis}\label{DiscriminantAnalysis}

At this point we study regularization methods for linear discriminant analysis based on random matrix theory. Linear Discriminant Analysis, introduced by Fischer, has been one of the most commonly used classification methods. For example, it has been used in areas such as finance (\cite{LDA_finance}) and biology (\cite{golub1999molecular}). Our methods can potentially  be extended to study regularization in quadratic discriminant analysis, which is again a very popular method used by scientists (for instance, \cite{zhang1997identification}).

First of all, it is essential to derive asymptotic formulas for the classification error for regularized Linear Discriminant Analysis. Once we have done that we will be able to write down an optimization program to find the optimal shrinkage function for the covariance matrix.
\bigskip

\textbf{Classification Model and Assumptions} We will build on the model from \cite{wager}. We assume that we have data generated by two different distributions with equal probabilities. The distributions are assumed to be Gaussian with means $\mu_1=\delta,\mu_2=-\delta\in\mathbb{R}^p.$

\textit{Assumptions on the means}

We assume that the coordinates of $\delta$ are independent and identically distributed random variables with mean 0 and variance ${\alpha^2}{p^{-1}}$. Notice that this is similar to the assumption made in linear regression that each predictor variable has a small effect on the outcome. In particular, each predictor variable contributes a small effect in the distance between class means. In the reference above the authors allow the means to be centered around a point different from zero, but this has no affect on the analysis. We assume that the coordinates of $\sqrt{p}\delta$ have uniformly bounded moments of order $4+\eta$ for some fixed $\eta>0$.
\bigskip

\textit{Assumptions on the covariance matrix}

The two Gaussian distributions have covariance matrix $\Sigma$ which we assume has a spectral distribution that converges weakly to a measure $H$ supported on the real line and there exist deterministic constants $b,B>0$ with $0<b<\lambda_{\min}({\Sigma})\leq \lambda_{\max}(\Sigma)<{B}<\infty$. 

\bigskip
\textit{Labeled data points} 

Suppose that there are $n$ data points $(x_1,y_1),\cdots,(x_n,y_n)$ such that the first ${n}/{2}$ of them, which were generated by the first distribution, have labels $y_i=1$, while the other half have labels $y_i=-1$ and were generated by the second distribution. As always, the setting we are interested in is when $p,n\rightarrow{\infty}$ and ${p}/{n}\rightarrow{\gamma}>0$. We will use the notation $x_i=\Sigma^{{1}/{2}}z_i+\delta y_i$ and $S_n=n^{-1}\sum_{i=1}^n\Sigma^{{1}/{2}}{z_iz_i^\intercal}\Sigma^{{1}/{2}}.$

To classify new data points the Bayes oracle estimator is $\hat{y}(x)=sign(\delta^\intercal\Sigma^{-1}x)$.  Of course, $\Sigma,\delta$ have to be estimated from the data. For $\delta$ we use the estimate $$\displaystyle\hat{\delta}=\frac{1}{n}\sum_{i=1}^{\frac{n}{2}}x_i-\frac{1}{n}\sum_{i=\frac{n}{2}+1}^nx_i.$$ If we replace $\Sigma$ by the empirical covariance matrix $\hat{\Sigma}$, then, as the Marcenko-Pastur theorem indicates, there will be significant noise in the estimation. In addition, if $\gamma>1$, the empirical covariance matrix will not be invertible. For this reason the author in \cite{friedman1989regularized} recommends to use $(\hat{\Sigma}+\lambda)^{-1}$ instead. This is a linear shrinkage method. However, it has been observed (for example, in \cite{lw_2012} and \cite{nercome}) that employing nonlinear shrinkage methods for covariance estimation can greatly enhance the accuracy. For our problem this would mean using the estimator $\hat{y}=sign(\hat{\delta}^\intercal h(\hat{\Sigma})x)$. What is more, it was explained in great detail in \cite{johnstone} that optimal shrinkage depends heavily on the choice of loss function that is used in the estimation. As a consequence, choosing an optimal shrinkage for, say, the Frobenius or Stein loss, does not imply that the shrinkage is optimal for classification purposes. This leads naturally to the questions that we examine in this section, namely we try to answer the following: \textit{How does the choice of shrinkage function affect the classification accuracy? How do well-known shrinkage methods perform asymptotically? What is the optimal shrinkage function for Discriminant Analysis?}
\bigskip

\textbf{Improved Mean Estimation}

First of all, as mentioned in the Introduction, we explain how estimation of ${\delta}$ can be improved by a shrinkage estimator that relies on the sample mean and the sample covariance. To motivate the estimator we impose a Gaussian prior on $\delta.$ Since $y_ix_i|\delta\sim \mathcal{N}(\delta,\Sigma),$ the posterior density of $\delta$ is proportional to $$\exp{\left(-\frac{\sum_{i=1}^n(y_ix_i-\delta)\Sigma^{-1}(y_ix_i-\delta)}{2}-\frac{p\Norm{\delta}^2}{2\alpha^2}\right)},$$ hence $\delta|y_1x_1,\cdots,y_nx_n$ is Gaussian and the posterior mean is given by $$\mathbb{E}[\delta|y_1x_1,\cdots,y_nx_n]=\frac{\alpha^2 n}{p}\left(\Sigma+\frac{n\alpha^2}{p}\right)^{-1}\hat{\delta}.$$ If $\Sigma$ was known we could use this formula, but for unknown $\Sigma$ we might again try to use an estimator of the form $$\hat{\delta}^{(r)}=r(\hat{\Sigma})\hat{\delta}.$$ Surprisingly, if we try to minimize over $r$ the distance $\Norm{\hat{\delta}^{(r)}-\delta}$ we get the following result proved in Section \ref{app}, which is true without the Gaussian assumption on $\delta.$

\begin{proposition}\label{prop:mean_delta}
The continuous function $h$ that asymptotically minimizes $\Norm{\hat{\delta}^{(r)}-\delta}$ is given by $$r(x)=\frac{\frac{\alpha^2}{\gamma}}{\frac{\alpha^2}{\gamma}+\frac{1}{x(f(x)^2+g(x)^2)}},x\in supp(F_{\gamma,H})-\{0\}.$$ If $\gamma>1$ the value of the optimal $r$ at 0 is given by: $$r(0)=\frac{\frac{\alpha^2}{\gamma}}{\frac{\alpha^2}{\gamma}+\frac{1}{(\gamma-1)\underline{m}(0)}}$$
\end{proposition}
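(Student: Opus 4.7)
The plan is to decompose
$$\hat{\delta}^{(r)} - \delta = (r(\hat{\Sigma}) - I)\delta + r(\hat{\Sigma})\varepsilon,$$
where $\varepsilon := \hat{\delta} - \delta = n^{-1}\Sigma^{1/2}\sum_{i=1}^n y_i z_i$ is independent of $\delta$ and has mean zero and covariance $n^{-1}\Sigma$. Expanding the squared norm produces three pieces: a bias term $\delta^\intercal (r(\hat{\Sigma})-I)^2 \delta$, a variance term $\varepsilon^\intercal r(\hat{\Sigma})^2 \varepsilon$, and a cross term $2\delta^\intercal (r(\hat{\Sigma})-I) r(\hat{\Sigma})\varepsilon$. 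The strategy is to identify the almost-sure limit of each piece through Proposition \ref{gen_lp} and the Marcenko-Pastur theorem, and then to minimize pointwise in the spirit of Corollary \ref{lw_opt_shrink}.

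The bias term is the easiest: since $\delta$ is independent of $\hat{\Sigma}$, Marcinkiewicz-Zygmund concentration (exactly as used in the proof of Theorem \ref{main_1}) gives $\delta^\intercal (r(\hat{\Sigma})-I)^2\delta = \tfrac{\alpha^2}{p}\mathrm{tr}((r(\hat{\Sigma})-I)^2) + o(1)$ almost surely, and the Marcenko-Pastur theorem applied to the bounded continuous function $(r-1)^2$ yields the limit $\alpha^2\int(r(x)-1)^2\,dF_{\gamma,H}(x)$. The variance term is the technical heart of the argument, because $\varepsilon$ and $\hat{\Sigma}$ depend on the same $z_i$. Writing it as $n^{-2}\sum_{i,j} y_i y_j z_i^\intercal \Sigma^{1/2} r(\hat{\Sigma})^2 \Sigma^{1/2} z_j$, I would introduce the leave-one-out sample covariance $\hat{\Sigma}^{(i)}$ omitting the $i$-th sample, so that $\hat{\Sigma}^{(i)}$ is independent of $z_i$; a rank-one perturbation bound for the resolvent via the Sherman-Morrison identity, transferred to $r$ through the Cauchy-integral representation used in the proof of Proposition \ref{gen_lp}, gives $\|r(\hat{\Sigma}) - r(\hat{\Sigma}^{(i)})\| = O(n^{-1})$ in operator norm, almost surely. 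The diagonal contributions then concentrate, by a Hanson-Wright inequality, to $n^{-1}\mathrm{tr}(\Sigma r(\hat{\Sigma})^2)$, and Proposition \ref{gen_lp} applied with $h = r^2$ produces the limit $\gamma M_{\gamma,H}(r^2)$; the off-diagonal terms are killed by a similar double leave-one-out argument. The cross term vanishes by the same tools together with the independence $\delta \perp \varepsilon$.

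Combining the three, $\Norm{\hat{\delta}^{(r)}-\delta}^2$ converges almost surely to $\alpha^2\int (r(x)-1)^2\,dF_{\gamma,H}(x) + \gamma M_{\gamma,H}(r^2)$. Substituting the explicit density of $F_{\gamma,H}$ and the formula from Proposition \ref{gen_lp}, the integrand on the continuous part of the spectrum is the pointwise quadratic $\alpha^2(r(x)-1)^2 + \frac{\gamma}{x(f(x)^2+g(x)^2)}\,r(x)^2$ weighted by $g(x)/(\pi\gamma)$, while when $\gamma > 1$ the atom at $0$ contributes $\alpha^2(1-\gamma^{-1})(r(0)-1)^2 + r(0)^2/\underline{m}(0)$. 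Differentiating each expression with respect to $r(x)$ (respectively $r(0)$) and setting the derivative to zero produces exactly the two formulas stated in the proposition; existence of a continuous function attaining these pointwise minima follows from a partition-of-unity construction as in Corollary \ref{lw_opt_shrink}. The main obstacle throughout is establishing the rank-one perturbation control for the nonlinear shrinkage $r(\hat{\Sigma})$ that decouples $\varepsilon$ from $\hat{\Sigma}$; beyond that step, the argument is a direct adaptation of the machinery already developed for Theorem \ref{main_1}.
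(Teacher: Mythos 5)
Your three-way decomposition and the final pointwise minimization are exactly right and match the paper, but there is a genuine gap in the middle step, and the paper's proof is in fact much shorter than you anticipate.

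The paper never has to decouple $\hat{\delta}-\delta$ from $\hat{\Sigma}$: in the LDA section the data are Gaussian and $\hat{\Sigma}$ is the pooled \emph{within-class} sample covariance. By the standard Gaussian independence of sample mean and (centered) sample covariance, $u$ in $\hat{\delta}=\delta+\Sigma^{1/2}u/\sqrt{n}$ is exactly independent of $\hat{\Sigma}$ (this is stated in the paper and used again in the proof of Theorem \ref{main_discr}). Once one has that, the bias concentrates to $\tfrac{\alpha^2}{p}\operatorname{tr}\bigl((r(\hat{\Sigma})-I)^2\bigr)$, the variance concentrates (directly, via Lemma \ref{lp_lemma} applied conditionally on $\hat{\Sigma}$) to $\tfrac{1}{n}\operatorname{tr}\bigl(\Sigma r(\hat{\Sigma})^2\bigr)$, the cross term vanishes, and Proposition \ref{gen_lp} and Marcenko--Pastur do the rest, just as in your final paragraph. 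You called the variance term ``the technical heart'' precisely because you missed this independence.

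The route you propose instead has a concrete error. You assert $\Norm{r(\hat{\Sigma})-r(\hat{\Sigma}^{(i)})}=O(n^{-1})$ in operator norm via Sherman--Morrison and the Cauchy-integral representation. This is false: removing one sample is a rank-one perturbation with $\Norm{\tfrac{1}{n}x_ix_i^\intercal}\asymp p/n = O(1)$, and the Sherman--Morrison correction $R^{(i)}(z)\tfrac{x_ix_i^\intercal}{n}R^{(i)}(z)\big/(1+\tfrac{1}{n}x_i^\intercal R^{(i)}(z)x_i)$ has operator norm $O(1)$, not $O(n^{-1})$. What is $O(n^{-1})$ is the \emph{normalized trace} difference $\tfrac{1}{n}\bigl|\operatorname{tr}\Sigma R(z)-\operatorname{tr}\Sigma R^{(i)}(z)\bigr|$, which is what the paper actually uses (see equation \eqref{const2} in the proof of Proposition \ref{modif_lda}). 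In particular an operator-norm bound cannot control $\tfrac{1}{n}z_i^\intercal\Sigma^{1/2}\bigl[r(\hat{\Sigma})^2-r(\hat{\Sigma}^{(i)})^2\bigr]\Sigma^{1/2}z_i$, which a priori is $O(1)$; one would need to carry the Sherman--Morrison corrections through explicitly, as is done (with considerable effort) in Proposition \ref{modif_lda}. So your leave-one-out route, while it points toward what would be needed under non-Gaussian data, is both unnecessary here and, as written, does not close. Either invoke the Gaussian independence or redo the step at the level of traces and quadratic forms rather than operator norms.
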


The surprising nature of this result is that $r$ comes from plugging in the optimal shrinkage function for covariance estimation in Frobenius loss from \cite{lw_2012} for $\Sigma$ in the posterior mean derived above, although one might expect that trying to approximate $(\Sigma+\gamma^{-1}\alpha^2)^{-1}$ directly might be better. Notice that for the classifier $\hat{y}=sign(\hat{\delta}^\intercal h(\hat{\Sigma})x)$ if $h$ is chosen optimally using $\hat{\delta}$ instead of a better estimate $\hat{\delta}^{(r)}$ there should be no effect in the accuracy, hence the use of $\hat{\delta}$ is justified for the vector of means.

\bigskip

We now return to the problem of the asymptotic classification error. We prove the following result in Section \ref{app} using random matrix theory. This result is of independent mathematical interest, but also the main ingredient in deriving the asymptotic formula of the classification error. Notice that using a polarization argument the result can easily be extended to provide asymptotics for functionals of the form $p^{-1}tr(\Sigma h_1(S_n)\Sigma h_2(S_n)).$

\begin{proposition}\label{Prop_LDA}
There exists a continuous function $K:supp(F_{\gamma,H})\times supp(F_{\gamma,H})\rightarrow{\mathbb{R}}$ such that for any bounded continuous function $h$ defined in an open set containing $supp(F_{\gamma,H})$ we have $$\frac{1}{p}tr\left(\Sigma h\left(S_n\right)\Sigma h\left(S_n\right)\right)\xrightarrow{a.s.}T_{\gamma,H}(h),$$ where \begin{equation}\begin{split} T_{\gamma,H}(h)=  \int \frac{h^2(x)g(x)}{\gamma\pi x^2(f(x)^2+g(x)^2)^2}dx+\int \int K(x,y) h(x)h(y)dx dy,\\
\end{split}\end{equation} if $\gamma<1$. If $\gamma>1$ there is an additional term equal to

\begin{equation}\begin{split}\frac{1}{\gamma}\left[\frac{\underline{m}'(0)}{\underline{m}(0)^4}-\frac{1}{\underline{m}(0)^2}\right]h^2(0)+\frac{2h(0)}{\gamma\underline{m}(0)^2}\int u_h(x)dx\\u_h(x)= \frac{f(x)^2+g(x)^2-2f(x)\underline{m}(0)-x\underline{m}(0)(f(x)^2+g(x)^2)}{\pi x^2(f(x)^2+g(x)^2)^2} g(x)h(x).
\end{split}\end{equation}

The function $K$ is explicitly defined as $$K(x,y)=-\frac{g(x)g(y)}{\gamma\pi^2xy(f(x)^2+g(x)^2)(f(y)^2+g(y)^2)}$$
$$+2\frac{f(x)\left [f(y)^2+g(y)^2 \right]-f(y)\left[f(x)^2+g(x)^2)\right]}{\gamma\pi^2 xy(y-x)\left[f(x)^2+g(x)^2\right]^2\left[f(y)^2+g(y)^2\right]^2}g(x)g(y).$$ 

\end{proposition}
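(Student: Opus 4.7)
The plan is to extend the contour-integration strategy used for Proposition \ref{gen_lp} to a product of two resolvents. By the Weierstrass density argument it suffices to treat $h$ holomorphic in a neighbourhood of a compact subset of $[0,\infty)$ containing $supp(F_{\gamma,H})$ (the almost-sure uniform bound $\limsup\|S_n\|\le h_2(1+\sqrt\gamma)^2$ used in the proof of Proposition \ref{gen_lp} keeps all spectral data in this subset). Cauchy's formula and the commutation of scalar integrals with the trace give
\begin{equation*}
\frac{1}{p}tr\bigl(\Sigma h(S_n)\Sigma h(S_n)\bigr) \;=\; -\frac{1}{4\pi^2}\oint_\Gamma\oint_\Gamma h(w_1)h(w_2)\, G_n(w_1,w_2)\,dw_1\,dw_2,
\end{equation*}
where $G_n(w_1,w_2):=p^{-1}tr\bigl(\Sigma(S_n-w_1)^{-1}\Sigma(S_n-w_2)^{-1}\bigr)$ and $\Gamma$ is a simple closed curve enclosing that subset. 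Uniform Lipschitz control on $G_n$ along $\Gamma^2$, established exactly as in the proof of Proposition \ref{gen_lp} (via the uniform bound on resolvent norms), will promote almost-sure pointwise convergence of $G_n$ to convergence of the double integral by bounded convergence.

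The main step is thus the almost-sure limit of $G_n$. The guiding heuristic is the deterministic-equivalent idea behind \cite{ledoit2011eigenvectors}: inside $\Sigma$-valued traces, $(S_n-w)^{-1}$ behaves like $(\Theta(w)\Sigma-wI)^{-1}$ with $\Theta(w)=1-\gamma-\gamma w m(w)=-w\underline{m}(w)$, which for a single resolvent already recovers \eqref{contour_formula}. A naive product of deterministic equivalents gives only the factorized piece
\begin{equation*}
\int \frac{t^2\,dH(t)}{(\Theta(w_1)t-w_1)(\Theta(w_2)t-w_2)},
\end{equation*}
but this misses a correction arising from the fact that the two resolvents use the same $S_n$. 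To capture this rigorously I would derive a closed self-consistent equation for $G_n$ via Gaussian integration by parts in the Gaussian $Z$ case, and then universalize to $Z$ with uniformly bounded $12$-th moments through a Lindeberg swapping argument; an alternative is to differentiate the one-point formula of Proposition \ref{gen_lp} and exploit the resolvent identity $(S_n-w_1)^{-1}(S_n-w_2)^{-1}=[(S_n-w_1)^{-1}-(S_n-w_2)^{-1}]/(w_1-w_2)$, whose $1/(w_1-w_2)$ factor is ultimately what produces both the $(y-x)^{-1}$ in $K(x,y)$ and the divided-difference expression $[f(x)(f(y)^2+g(y)^2)-f(y)(f(x)^2+g(x)^2)]/(y-x)$ appearing in its numerator.

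Once $G_\infty(w_1,w_2)$ is available in closed form, the final step follows the contour-to-real reduction of Proposition \ref{gen_lp}: deform $\Gamma$ onto a thin tube around the support of $F_{\gamma,H}$ and send $\epsilon\downarrow 0$ using $\underline{m}(x\pm i\epsilon)\to f(x)\pm ig(x)$. The $Im$ parts produce the $g(x)g(y)$ prefactors, the divided-difference term produces the rational expression in $f$ appearing in $K(x,y)$, and the boundary contribution collected along the diagonal of the two tubes yields the first single integral $\int h(x)^2 g(x)/[\gamma\pi x^2(f(x)^2+g(x)^2)^2]\,dx$. For $\gamma>1$ I would decompose the contour as in Proposition \ref{gen_lp}: a bulk contour $\Gamma_1$ around the positive support plus a small circle $\Gamma_2$ of radius $r\downarrow 0$ about the origin. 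The four pieces $\Gamma_i\times\Gamma_j$ contribute, respectively, the bulk integrals above (from $\Gamma_1^2$); the $h(0)^2$ term with the $\underline{m}'(0)/\underline{m}(0)^4$ factor, extracted by a Laurent expansion $\underline{m}(w)=\underline{m}(0)+\underline{m}'(0)w+\cdots$ together with $w\underline{m}(w)\to 0$ (from $\Gamma_2^2$); and the cross term $2h(0)(\gamma\underline{m}(0)^2)^{-1}\int u_h(x)\,dx$ (from $\Gamma_1\times\Gamma_2$ and $\Gamma_2\times\Gamma_1$). The principal obstacle is unmistakably the closed-form derivation of the two-point limit $G_\infty$, since no result already in the paper's bibliography computes a product of two resolvents evaluated at different spectral arguments and sandwiched with $\Sigma$-insertions.
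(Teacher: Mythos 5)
Your overall architecture is the same as the paper's: reduce to holomorphic $h$ by density, write $p^{-1}\mathrm{tr}(\Sigma h(S_n)\Sigma h(S_n))$ as a double Cauchy contour integral of $G_n(w_1,w_2)=p^{-1}\mathrm{tr}(\Sigma(S_n-w_1)^{-1}\Sigma(S_n-w_2)^{-1})$, establish an almost-sure deterministic limit for $G_n$, and then deform the contours onto the real axis using $\underline{m}(x\pm i\epsilon)\to f(x)\pm ig(x)$ while tracking the diagonal singularity and the atom at $0$ when $\gamma>1$. You also correctly pinpoint that the single hard step is the closed-form limit of $G_n$.

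The genuine gap is exactly there, and one of your two suggested routes to fill it cannot work as stated. You propose to ``differentiate the one-point formula of Proposition \ref{gen_lp} and exploit the resolvent identity $(S_n-w_1)^{-1}(S_n-w_2)^{-1}=[(S_n-w_1)^{-1}-(S_n-w_2)^{-1}]/(w_1-w_2)$.'' But the quantity you must control has a second factor of $\Sigma$ sandwiched between the two resolvents, $\Sigma R(w_1)\,\Sigma\,R(w_2)$, and since $\Sigma$ and $R(w_2)$ do not commute, neither the resolvent identity nor $\partial_z$ applied to $p^{-1}\mathrm{tr}(\Sigma R(z))$ produces this object. Differentiation only gives $p^{-1}\mathrm{tr}(\Sigma R(z)^2)$, which is the wrong trace (one $\Sigma$ insertion, not two). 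So Option B collapses to exactly the ``naive product of deterministic equivalents'' that you already recognize misses the correction term. Option A (Gaussian integration by parts plus Lindeberg swapping) is plausible in spirit but is not carried out, so the proposal still lacks its central computation. For comparison, the paper's Proposition \ref{modif_lda} derives the limit by a leave-two-out resolvent decomposition: it expands $E=p^{-1}\sum_{k,j} a_{kj}$ with $a_{kj}=\mathrm{tr}\bigl(n^{-1}x_kx_k^\intercal R(z_1)\,n^{-1}x_jx_j^\intercal R(z_2)\bigr)$, controls each $a_{kj}$ via Hanson-Wright concentration and the rank-one updates $R\mapsto R_{kj}$, and notes that $E$ equals $p^{-1}\mathrm{tr}\bigl((I+z_1R(z_1))(I+z_2R(z_2))\bigr)$, which is a plain two-resolvent trace with no $\Sigma$ in between, where your resolvent identity legitimately applies via the Marcenko--Pastur theorem. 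Balancing the two expressions for $E$ isolates $p^{-1}\mathrm{tr}(\Sigma R(z_1)\Sigma R(z_2))$ and yields the closed form. A secondary, lesser issue: you integrate both variables over the same contour $\Gamma$, so the kernel $\bar F(z_1,z_2)\propto(\underline{m}(z_2)-\underline{m}(z_1))/(z_2-z_1)$ would already be singular on $\Gamma\times\Gamma$; the paper avoids this by nesting two non-intersecting contours $\Gamma\subset\Gamma'$ and shrinking them in sequence, which is what cleanly produces the extra diagonal single integral $\int h^2(x)g(x)/[\gamma\pi x^2(f^2+g^2)^2]\,dx$ via a principal-value/indentation calculation.
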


\bigskip
\bigskip

When the covariance matrix $\Sigma$ is identity, we can actually see that all terms in $T_{\gamma,I}$ cancel out, except for $ \int {h^2(x)g(x)}/(\gamma\pi x^2(f(x)^2+g(x)^2)^2)dx,$ which simplifies to $ \int h(x) {g(x)}/{\gamma\pi}dx$, and (for $\gamma>1$) ${\gamma^{-1}}\left[(\underline{m}'(0)-\underline{m}(0)^2)/{\underline{m}(0)^4}\right]h^2(0),$ which simplifies to $(1-{\gamma^{-1}})h(0)^2.$ This agrees the limit of $ p^{-1}tr(h(S_n)^2)$ under the standard Marcenko-Pastur distribution. This result is going to play the role of Proposition \ref{gen_lp} in the case of regression, but the proofs for Discriminant Analysis are far more involved. 

We now present our main theorem for this chapter, the proof of which is again in Section \ref{app}. The general strategy in the proof is as follows: Firstly we write down a formula for the classification error that depends only on quadratic forms of $\delta$ and $\hat{\delta}.$ Then we control the deviations of the quadratic forms using again the Marcinkiewicz-Zygmund inequality. Finally, we derive limits for the expectations of the quadratic forms using the generalized Marcenko-Pastur distribution, Proposition \ref{gen_lp} and Proposition \ref{Prop_LDA}.

\begin{theorem}\label{main_discr}
Assume that in order to perform discriminant analysis we use a shrinkage function $h\geq 0$ for the eigenvalues of the sample covariance matrix $\hat{\Sigma}$, which is bounded and continuous in an open interval containing $supp(F_{\gamma,H})$. In particular, we use the classification rule $\hat{y}(x)=sign(\hat{\delta}^\intercal h(\hat{\Sigma})x)$. Then, the out-of-sample classification error converges almost surely to $\Phi(-\sqrt{\Theta(h;\gamma,H))},$ where $\Phi$ is the standard normal c.d.f. and $$\Theta(h)=\frac{\alpha^4\left (\int h(x)dF_{\gamma,H}\right)^2}{\alpha^2 M_{\gamma,H}(h^2)+\gamma T_{\gamma,H}(h)}.$$

\end{theorem}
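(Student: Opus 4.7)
The plan is to condition on the training data and on $\delta$, so that for a fresh pair $(x_0,y_0)$ with $x_0=\Sigma^{1/2}z_0+y_0\delta$, the statistic $\hat\delta^\intercal h(\hat\Sigma)x_0$ is Gaussian (it is linear in $z_0\sim\mathcal{N}(0,I)$) with conditional mean $y_0\,\hat\delta^\intercal h(\hat\Sigma)\delta$ and conditional variance $\hat\delta^\intercal h(\hat\Sigma)\Sigma h(\hat\Sigma)\hat\delta$. The conditional misclassification probability is therefore
$$\Phi\!\left(-\frac{\hat\delta^\intercal h(\hat\Sigma)\delta}{\sqrt{\hat\delta^\intercal h(\hat\Sigma)\Sigma h(\hat\Sigma)\hat\delta}}\right),$$
independent of the sign of $y_0$. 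The whole claim reduces to showing that the ratio inside $\Phi$ converges almost surely to $\sqrt{\Theta(h)}$; positivity of the numerator in the limit follows from $h\ge 0$ and the computation below.

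Next I would replace $\hat\Sigma$ by the oracle matrix $S_n$. Since $\hat\Sigma-S_n$ is a rank-$O(1)$ perturbation coming from the class sample means, after approximating $h$ uniformly on a compact neighborhood of $\operatorname{supp}(F_{\gamma,H})$ by polynomials, $\|h(\hat\Sigma)-h(S_n)\|$ is negligible and every bilinear form of interest is unaffected in the limit. Writing $\hat\delta=\delta+\Sigma^{1/2}\bar z$ with $\bar z=n^{-1}\sum_i y_i z_i$ (i.i.d.\ entries of mean $0$, variance $1/n$, and sufficiently many finite moments), I would expand
$$\hat\delta^\intercal h(S_n)\delta=\delta^\intercal h(S_n)\delta+\bar z^\intercal \Sigma^{1/2}h(S_n)\delta,$$
$$\hat\delta^\intercal h(S_n)\Sigma h(S_n)\hat\delta=\delta^\intercal h(S_n)\Sigma h(S_n)\delta+2\delta^\intercal h(S_n)\Sigma h(S_n)\Sigma^{1/2}\bar z+\bar z^\intercal \Sigma^{1/2}h(S_n)\Sigma h(S_n)\Sigma^{1/2}\bar z.$$

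For each quadratic form I would invoke a Marcinkiewicz-Zygmund-type inequality to replace it by its trace proxy: $v^\intercal A v=\sigma^2\,\operatorname{tr}(A)+o(1)$ almost surely when $v$ has i.i.d.\ entries of variance $\sigma^2$ with uniformly bounded $(4+\eta)$-th moments and $A$ has bounded operator norm, and $u^\intercal A v$ concentrates at $0$ when $u$ is independent of $v$. Applying this to $\delta$ (variance $\alpha^2/p$) and $\bar z$ (variance $1/n$), the $\delta$-$\bar z$ cross terms vanish and one is left with
$$\tfrac1p\operatorname{tr}(h(S_n)),\qquad \tfrac1p\operatorname{tr}(\Sigma h(S_n)^2),\qquad \tfrac1p\operatorname{tr}(\Sigma h(S_n)\Sigma h(S_n)),$$
with almost sure limits $\int h\,dF_{\gamma,H}$ (Marcenko-Pastur for a bounded continuous $h$), $M_{\gamma,H}(h^2)$ (Proposition~\ref{gen_lp} applied to $h^2$), and $T_{\gamma,H}(h)$ (Proposition~\ref{Prop_LDA}). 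Collecting the pieces, the numerator tends to $\alpha^2\int h\,dF_{\gamma,H}$ while the squared denominator tends to $\alpha^2 M_{\gamma,H}(h^2)+(p/n)\cdot T_{\gamma,H}(h)\to \alpha^2 M_{\gamma,H}(h^2)+\gamma T_{\gamma,H}(h)$, so the ratio squared is exactly $\Theta(h)$ and the conditional error converges to $\Phi(-\sqrt{\Theta(h)})$ as announced.

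The main obstacle I anticipate is that $\bar z$ is a function of the same $z_i$'s that build $S_n$, so one cannot apply Marcinkiewicz-Zygmund to $\bar z^\intercal \Sigma^{1/2}h(S_n)\Sigma h(S_n)\Sigma^{1/2}\bar z$ conditionally on $S_n$. A symmetrization argument resolves this: since $S_n$ is invariant under the sign flips $z_i\mapsto -z_i$ while $\bar z$ is not, expanding $\bar z^\intercal A\bar z=n^{-2}\sum_{i,j}y_iy_j z_i^\intercal A z_j$ and randomizing the signs wipes out all off-diagonal contributions, leaving the diagonal $n^{-2}\sum_i z_i^\intercal A z_i$ which concentrates at $n^{-1}\operatorname{tr}(A)$ by standard leave-one-out and rank-one perturbation estimates combined with the moment assumptions. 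The same decoupling trick handles the $\bar z$-$\delta$ bilinear form, and the remaining algebra is routine bookkeeping.
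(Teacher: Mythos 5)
Your overall architecture matches the paper's: you reduce the out-of-sample error to the conditional misclassification probability $\Phi\bigl(-\hat\delta^\intercal h(\hat\Sigma)\delta/\Norm{\Sigma^{1/2}h(\hat\Sigma)\hat\delta}\bigr)$, trade $\hat\Sigma$ for $S_n$ by a low-rank perturbation argument, apply a Marcinkiewicz--Zygmund concentration lemma to the quadratic forms in $\delta$ and in the mean noise, and then feed the three surviving trace functionals into the Marcenko--Pastur theorem, Proposition \ref{gen_lp} and Proposition \ref{Prop_LDA}. All of that is exactly what the paper does.

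Where you diverge is the ``main obstacle'' you flag at the end, and there you take a harder road than necessary. The paper does not need a symmetrization/decoupling argument, because in the Gaussian model the centered class sample means are \emph{exactly} independent of the pooled within-class covariance $\hat\Sigma$ (the classical independence of sample mean and centered sample covariance for normal data, applied within each class). Hence $u\sim\nn(0,I_p)$ in $\hat\delta=\delta+\Sigma^{1/2}u/\sqrt n$ is independent of $\hat\Sigma$, one applies Lemma \ref{lp_lemma} conditionally on $\hat\Sigma$, and only \emph{afterwards} replaces $\hat\Sigma$ by $S_n$ inside the traces via the rank-$O(1)$ perturbation. Your proposed symmetrization argument can be made to work, but as stated it overclaims: randomizing signs does not ``wipe out'' the off-diagonal of $\bar z^\intercal A\bar z$, it only makes that part zero-mean, and one still has to bound its variance (which is $O(n^{-4}\Norm{A}_F^2\cdot n^2)=O(1/n)$ after a short computation, so the conclusion is fine) as well as justify the diagonal concentration with leave-one-out estimates. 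In short, you identified a real dependence issue but then missed the one-line Gaussian shortcut that makes it disappear; the upside of your longer route is that it would carry over to non-Gaussian $z_i$, which the paper's argument does not.

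One cosmetic point: the entries of your $\bar z=n^{-1}\sum_i y_i z_i$ are $\nn(0,1/n)$ under the model assumptions, not merely ``i.i.d.\ with enough moments''; stating the distribution explicitly is what unlocks the independence mentioned above.
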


In real applications we do not know what the actual functions $f,g$ are. However, they can be estimated from the data using kernel estimation. This is explained in \cite{jing2010nonparametric}, where the the authors recommend using a bounded and nonnegative density function with absolutely integrable derivative and window size $h\rightarrow{0}$ that satisfies $\lim nh^{\frac{5}{2}}=\infty.$ As mentioned in the Introduction, in \cite{lw_direct} use a similar method to approximate the optimal shrinkage function for covariance and precision matrix estimation.

We verify the correctness of our theorem in a simulation in Figure \ref{fig:test}. There we plot the classification error as a function of the signal strength $\alpha$ for unregularized and optimally regularized LDA. We see that the empirical estimate of the classification error and the asymptotical value predicted by our theorem are remarkably close. As one might expect, for $\alpha\rightarrow{0}$ all methods give approximately $50\%$ accuracy, while for $\alpha$ large the accuracy of the methods tends to $100\%.$

\begin{figure}[H]
\centering
\begin{subfigure}{.5\textwidth}
  \centering
\includegraphics[trim={0 0 0 1cm},clip,width=1\linewidth]{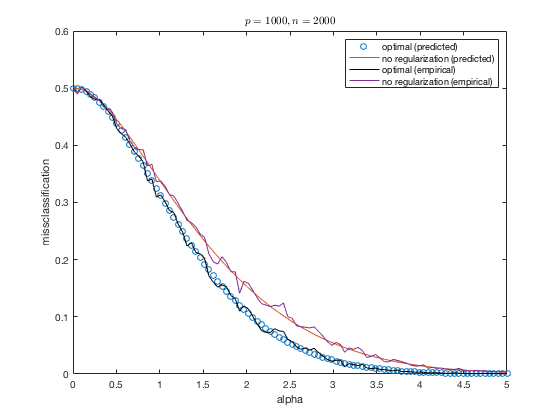}

  \caption{$H=\frac{\delta_1+\delta_{4}}{2}$}
\end{subfigure}%
\begin{subfigure}{.5\textwidth}
  \centering
\includegraphics[trim={0 0 0 1cm},clip,width=1\linewidth]{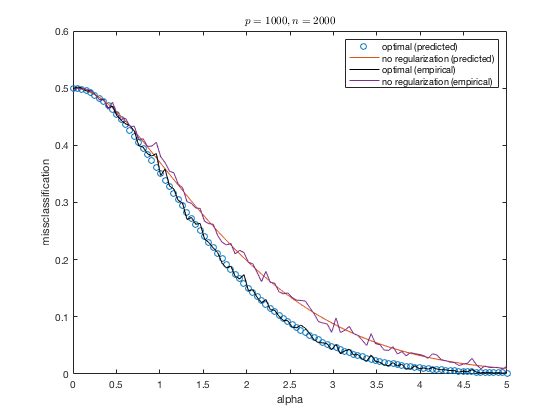}
  \caption{$H=\frac{\delta_1+\delta_2+\delta_3+\delta_4+\delta_5}{5}$}
\end{subfigure}
\caption{Empirical vs theoretical classification error for $p=1000,n=2000$ as a function of the signal strength $\alpha$ for two different choices of limiting spectral distribution $H$ for the population covariance matrix.}
\label{fig:test}
\end{figure}

\subsection{Optimization with respect to the shrinkage function}

Setting $s={\alpha^2}/{\gamma}$ (which plays the role of a signal-to-noise ratio here), we would like to maximize $$\displaystyle \frac{\left(\int h(x)d F_{\gamma,H}(x)\right)^2}{s M_{\gamma,H}(h^2)+T_{\gamma,H}(h)}.$$ Due to the  invariance of this ratio under rescalings of $h$, we can assume that we have fixed $\int hdF_{\gamma,H}=1$ and the problem now becomes

\begin{equation*}
\begin{aligned}
& \underset{h\geq 0}{\text{minimize}}
& & G(h)=sM_{\gamma,H}(h^2)+T_{\gamma,H}(h) \\
& \text{subject to}
& & \int hdF_{\gamma,H}=1.
\end{aligned}
\end{equation*}

The optimization program above describes the tradeoff between the minimizer of $M$ and the minimizer of $T$. In particular, if $\alpha\rightarrow{\infty}$ or $\gamma\rightarrow{0}$, the solution approximates the solution to \begin{equation*}
\begin{aligned}
& \underset{h\geq 0}{\text{minimize}}
& & G(h)=M_{\gamma,H}(h^2) \\
& \text{subject to}
& & \int hdF_{\gamma,H}=1.
\end{aligned}
\end{equation*}

Up to rescalings, $h(x)$ for $x>0$ is the same as the optimal shrinkage for covariance estimation, as wee see by examining when equality holds in the Cauchy-Schwartz: $$\int \frac{h^2g}{\gamma \pi x(f^2+g^2)}dx\int x(f^2+g^2)\frac{g}{\gamma\pi}dx\geq \left(\int \frac{hg}{\gamma\pi}dx\right)^2.$$ This proves the claim for $\gamma<1.$ For $\gamma>1$ we also need to take into account the value of $h(0),$ but it is straightforward to extend the argument.


We conclude that, when the signal is strong, the optimal shrinkage function for covariance estimation in Frobenius norm is approximately the same as the optimal shrinkage function for linear discriminant analysis. When the signal is low, other shrinkage functions might perform better. All things considered, the last theorem vastly generalizes the results of the references mentioned and allows us to compute sharp asymptotics for the accuracy of complicated shrinkage estimators like the ones from \cite{ledoit2011eigenvectors}, when applied to classification via discriminant analysis. 

 We now for simplicity restrict our attention to $\gamma<1$ and study more thoroughly the behaviour of the optimal shrinkage function. Since $$\frac{1}{p}tr\left(\Sigma h(S_n)\Sigma h(S_n)\right)\geq \left(\frac{tr\left(\Sigma h(S_n)\right)}{p}\right)^2\xrightarrow{a.s.} (M_{\gamma,H}(h))^2,$$ we can write down the following relaxation:
 
 \begin{equation*}
\begin{aligned}
& \underset{h\geq 0}{\text{minimize}}
& & M_{\gamma,H}(h^2) +\frac{\gamma}{\alpha^2}(M_{\gamma,H}(h))^2\\
& \text{subject to}
& & \int hdF_{\gamma,H}=1.
\end{aligned}
\end{equation*}

This optimization problem can easily be solved analytically. Below we make an interesting remark about the solution: If $h_0$ is the solution to this optimization problem, we have for any $u$ such that $\int u dF_{\gamma,H}=0$ 
\begin{equation}\label{eq:relax}\begin{split}\frac{d}{dt}M_{\gamma,H}((h_0+tu)^2) +\frac{\gamma}{\alpha^2}(M_{\gamma,H}(h_0+tu))^2|_{t=0}=0\\
\Rightarrow{\int \frac{h_0ug}{\gamma\pi x (f^2+g^2)}}dx+\frac{\gamma}{\alpha^2}\int \frac{h_0g}{\gamma \pi x(f^2+g^2)}dx \int \frac{ug}{\gamma\pi x (f^2+g^2)}dx=0\end{split}
\end{equation}

We conclude that, since this holds for all $u$ with $\int u{g} dx=0,$ it must be that for some constant $A$ $$\frac{g}{\gamma\pi}\left[\frac{h_0}{x(f^2+g^2)}+\frac{\gamma}{\alpha ^2} \frac{1}{x(f^2+g^2)}\int \frac{h_0g}{\gamma\pi x (f^2+g^2)}dx\right]=A\frac{g}{\gamma \pi}.$$ In other words, for suitably chosen constants $A,B$ we have for all $x\in supp(F_{\gamma,H})$ $$h_0(x)=Ax(f(x)^2+g(x)^2)-B.$$ This is the optimal shrinkage for covariance estimation with shift and rescaling. Since rescalings of $h$ do not affect the classification error we can take $A=1$. After this the solution can be viewed as a combination of ridge regularization (with negative parameter) and the optimal shrinkage for covariance estimation in Frobenius norm. 

\textbf{Solving the optimization problem} To solve for $h$ numerically, we may approximate $h$ by piecewise constant functions defined on small intervals that cover $supp(F_{\gamma,H})$. The problem then becomes a convex quadratic program in the values assigned to each interval. In fact, the condition $h\geq 0$ should be redundant, as the optimal shrinkage function should be positive. This procedure,  of course, does not impose continuity, but, if a continuous minimizer exists, one might expect that this procedure should approximate that.  In addition, it is possible to modify Theorem \ref{main_discr} to account for problems with finitely many discontinuities, but at this point we ignore this detail.

\bigskip
\textbf{Estimation of $\alpha^2$}

In order to solve numerically for the optimal shrinkage function in practice one needs to estimate $f,g$ and $\alpha^2.$ As mentioned $f,g$ can be estimated using kernel estimation. We now explain the missing part, namely estimation of $\alpha^2.$
With the assumptions made it is immediate to see that $$\Norm{\hat{\delta}}^2-\alpha^2-\frac{tr(\Sigma)}{n}\xrightarrow{a.s.}0.$$ Since $n^{-1}(tr(\Sigma)-tr(\hat{\Sigma}))\xrightarrow{a.s.}0,$ we conclude that $$\Norm{\hat{\delta}}^2-\frac{tr(\hat{\Sigma})}{n}\xrightarrow{a.s.}\alpha^2.$$

We compare the asymptotic missclassification risk for limiting spectral distribution $H=0.5(\delta_{0.75}+\delta_{15})$ and $\gamma=0.75.$ The estimators that we consider are the optimal shrinkage estimator that minimizes the asymptotic risk, the optimal shrinkage for covariance and precision estimation of \cite{ledoit2011eigenvectors} with respect to the Frobenius loss (the latter being $h(\lambda)=(\gamma-1-2\lambda f(\lambda))/{\lambda}$) -which we call ledoit-peche 1 and ledoit-peche 2 respectively- the optimal ridge-regularized estimator of the form $(\hat{\Sigma}+\lambda)^{-1}$ and the unregularized sample covariance estimator.

\begin{figure}[H]
    \centering
    \includegraphics[trim={0 0 0 1.5cm},clip,width=0.55\linewidth]{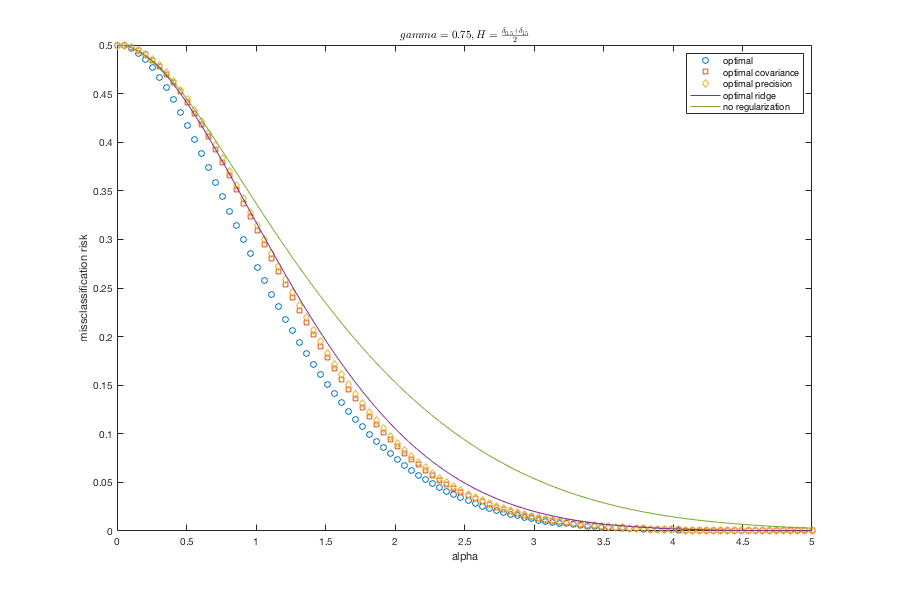}
    \caption{Comparison of different shrinkage estimators for LDA for $\gamma=0.75,H=0.5(\delta_{0.5}+\delta_{15}).$}
    \label{fig:lda_comparisons}
\end{figure}

We observe that the optimal covariance and optimal precision shrinkage functions perform very similarly and slightly better than optimally ridge-regularized shrinkage. The worst is unregularized, as one might expect.

\section{Conclusions and Future Work}\label{future}

In the first part of the paper we studied general singular value shrinkage methods for regularization of regression in high-dimensional statistics. For this purpose we derived results of independent mathematical interest for the convergence of some trace functionals that include both the population and the sample covariance matrices, as in Theorem \ref{main_1}. Before the development of these tools this was only possible for the case of ridge regression. We also presented important applications of Theorem \ref{main_1}. Firstly, we explained how it can be used to recover the optimal shrinkage function for covariance estimation derived heuristically in \cite{ledoit2011eigenvectors} and used in \cite{lw_2012}. Secondly, we proved the optimality of ridge regression with parameter ${\gamma}/{\alpha^2}$ in this setting among all shrinkage-based methods for general coefficient distributions. Furthermore, our results allowed us to analyze methods such as early stopping in models with arbitrary predictor covariance matrices. In particular, we studied the test error evolution in gradient descent training, which gave us important insights about the early stopping regularization methods used in the deep learning community and by practitioners. Even in this very simplistic setting the regime we are studying explains some attributes of the overtraining phenomenon seen in much more complex models. Within our model  early stopping may provide important benefits and achieve performance comparable to optimally tuned ridge regression for parameters much smaller than the optimal. As a direction of future work, it might be interesting to examine if similar results could be established for larger classes of models, for example for shallow neural networks. This will be a step closer to the extremely complex models that have been used in the last decade in the most successful applications of deep learning.

In the second part of the paper we provided sharp asymptotic formulas for regularization methods based on singular value shrinkage in high-dimensional classification. We were able to analyze asymptotically the performance of shrinkage methods proposed in the literature for covariance estimation (such as those in \cite{lw_2012}) in the case of discriminant analysis problems. With the tools previously available from random matrix theory this had been possible only for ridge-regularized linear discriminant analysis, so, to extend them, we had to derive results of independent mathematical interest. Our theorems also led to a procedure for finding numerically the optimal shrinkage function for classification in linear discriminant analysis. Finally, we examined and verified our results in extensive simulations. Extending our results to the case of quadratic discriminant analysis would be a natural next step. As a closely related direction for future work it might be interesting to explore how these results can be extended to other methods that are used frequently, such as logistic regression or support vector machines. As mentioned in the Introduction this question was considered in \cite{taheri2020sharp}, but only for standard Gaussian features. Similar questions could be asked for more general covariance matrices and distributions such as those in our paper. Answering these questions will allow us to understand better how these commonly used methods generalize in very high dimensional settings.

\bigskip


\section{Proofs of Main Results}\label{app}

\subsection{Proofs for Regression}

We will need the following lemma which is adapted from Lemma 7.8, Lemma 7.9 and Lemma 7.10 from \cite{erdHos2017dynamical}.

\begin{lemma}\label{lp_lemma}
Let $q\geq 2$ and $X_1,\cdots,X_N,Y_1,\cdots,Y_N$ be independent random variables with mean 0, variance 1 and $2q$-th moment bounded by $c_0$. Then, for any deterministic $(b_i)_{1\leq i\leq N}, (a_{ij})_{1\leq i,j\leq N}$ we have for some positive constant $C_q=C_q(c_0)$:

\begin{equation}
\Norm{\sum_i b_i (X_i^2-1)}_q\leq C_q (\sum_i \abs{b_i}^2)^{\frac{1}{2}}
\end{equation}

\begin{equation}
\Norm{\sum_{i,j}a_{ij}X_iY_j}_q \leq C_q (\sum_{i,j}a_{ij}^2)^{\frac{1}{2}}
\end{equation}

\begin{equation}
\Norm{\sum_{i\neq j}a_{ij}X_iX_j}_q\leq C_q (\sum_{i\neq j}a_{ij}^2)^{\frac{1}{2}}
\end{equation}
\end{lemma}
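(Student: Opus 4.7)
All three bounds are instances of the Marcinkiewicz--Zygmund--Rosenthal principle for sums of independent mean-zero random variables, lifted from the linear to the bilinear and quadratic setting. My plan is to build everything on the following workhorse estimate, which is a direct consequence of Rosenthal's inequality: if $X_1,\ldots,X_N$ are iid with mean $0$, variance $1$ and $\mathbb{E}|X_i|^{2q}\leq c_0$, and $(c_i)$ are deterministic, then for $q\geq 2$,
$$\Norm{\sum_i c_i X_i}_q \leq C_q \Big(\sum_i c_i^2\Big)^{1/2}.$$
This follows from Rosenthal's bound $\|\sum c_i X_i\|_q^q\leq C_q\{(\sum c_i^2)^{q/2}+\sum|c_i|^q\,\mathbb{E}|X_i|^q\}$ combined with the $\ell^2\subset\ell^q$ embedding $\sum|c_i|^q\leq(\sum c_i^2)^{q/2}$ valid for $q\geq 2$, and the moment bound $\mathbb{E}|X_i|^q\leq c_0^{1/2}$ coming from Cauchy--Schwarz on $\mathbb{E}|X_i|^{2q}\leq c_0$.

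\textbf{Inequalities (1) and (2).} Inequality (1) follows by applying Rosenthal directly to $Z_i=b_i(X_i^2-1)$: these are independent and mean zero, and their $q$-th moments are bounded by a constant (depending only on $c_0,q$) times $|b_i|^q$ using the assumed bound on $\mathbb{E}|X_i|^{2q}$. For (2), the plan is to condition on $Y=(Y_j)$. Conditionally, the sum is linear in $X$ with deterministic coefficients $c_i(Y)=\sum_j a_{ij}Y_j$, so the workhorse bound gives
$$\mathbb{E}_X\Big|\sum_{i,j}a_{ij}X_iY_j\Big|^q \leq C_q\Big(\sum_i c_i(Y)^2\Big)^{q/2}.$$
Taking $L^{q/2}$-norms in $Y$ and using Minkowski's inequality (valid since $q/2\geq 1$) reduces the problem to bounding $\|\sum_j a_{ij}Y_j\|_q^2$ for each $i$, which again follows from the workhorse bound and sums to the desired $(\sum_{i,j}a_{ij}^2)^{1/2}$.

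\textbf{Inequality (3) --- the main obstacle.} The difficulty here is that the diagonal is removed but the two factors $X_i,X_j$ come from the same family: conditioning on $(X_k)_{k\neq i}$ still leaves $X_i$ multiplying itself across different terms. The cleanest remedy is to invoke the classical decoupling inequality of de la Pe{\~n}a--Gin{\'e}. Introducing an independent copy $(Y_i)$ of $(X_i)$, one has
$$\Norm{\sum_{i\neq j}a_{ij}X_iX_j}_q \leq C_q\Norm{\sum_{i\neq j}a_{ij}X_iY_j}_q,$$
and the right-hand side is controlled by inequality (2) applied to the modified coefficients $\tilde a_{ij}=a_{ij}\mathbf{1}\{i\neq j\}$. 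As a back-up in case one wants to avoid decoupling as a black box, the alternative is a direct martingale-difference expansion of $\sum_{i<j}(a_{ij}+a_{ji})X_iX_j$ along the filtration generated by $X_1,\ldots,X_k$, applying Burkholder--Davis--Gundy to pass to a sum of squares of increments and then Rosenthal to each increment separately. Either way, (3) is the step requiring the most care, both because of the decoupling input and because one must verify that the resulting constants depend only on $c_0$ and $q$ and not on $N$.
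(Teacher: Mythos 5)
Your proposal is correct. Note first that the paper does not actually prove this lemma: it is imported by citation and adaptation from Lemmas 7.8--7.10 of Erd\H{o}s--Yau, so there is no in-paper argument to compare against. What you supply is a self-contained alternative, and the main points all check out: Rosenthal plus $\ell^2\hookrightarrow\ell^q$ plus Cauchy--Schwarz on $\mathbb{E}|X_i|^{2q}\leq c_0$ gives the linear ``workhorse'' estimate with a constant depending only on $q,c_0$; (1) follows by applying Rosenthal directly to the independent centered variables $b_i(X_i^2-1)$ with $\mathbb{E}|X_i^2-1|^q\leq 2^{q-1}(c_0+1)$; (2) follows by conditioning on $Y$, applying the linear bound with coefficients $c_i(Y)=\sum_j a_{ij}Y_j$, and then using Minkowski in $L^{q/2}$ (legitimate since $q/2\geq 1$) to obtain $\lVert\sum_i c_i(Y)^2\rVert_{q/2}\leq\sum_i\lVert c_i(Y)\rVert_q^2\leq C_q^2\sum_{i,j}a_{ij}^2$; and (3) follows by de la Pe\~na--Gin\'e decoupling applied to the tetrahedral (off-diagonal) form, reducing to (2). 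Your route differs from the cited Erd\H{o}s--Yau argument, which is in the spirit of your ``back-up'' plan: they work by a Marcinkiewicz--Zygmund / Burkholder-type martingale decomposition along the natural filtration rather than by invoking decoupling as a black box. The decoupling route is shorter and cleaner once the de la Pe\~na--Gin\'e theorem is granted; the martingale route is more elementary in the sense of being self-contained. Both yield constants depending only on $q$ and $c_0$ (not on $N$), which is the content that matters downstream in the paper. The only cosmetic remark is that in (3) it is slightly cleaner to symmetrize first, writing $\sum_{i\neq j}a_{ij}X_iX_j=\sum_{i<j}(a_{ij}+a_{ji})X_iX_j$, and note that $\sum_{i<j}(a_{ij}+a_{ji})^2\leq 2\sum_{i\neq j}a_{ij}^2$, so the symmetrization does not cost more than an absolute constant; this is harmless but worth stating explicitly since most decoupling statements are phrased for symmetric kernels.
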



\begin{proof}[Proof of Theorem \ref{main_1}]

Let us define $$M_1=\sum_{i=1}^p\sqrt{\lambda_i}h(\lambda_i)u_iu_i^\intercal, M_2=\sum_{i=1}^ph(\lambda_i)u_iv_i^\intercal.$$

We have $$\hat{w}=\sum_{i=1}^ph(\lambda_i)u_iv_i^\intercal \frac{y}{\sqrt{n}}=\left[\sum_{i=1}^p h(\lambda_i)u_iv_i^\intercal\right] \frac{X^\intercal w+\varepsilon}{\sqrt{n}}$$

$$=\sum_{i=1}^p \sqrt{\lambda_i}h(\lambda_i)u_iu_i^\intercal w +\sum_{i=1}^ph(\lambda_i) u_iv_i^\intercal \frac{\varepsilon}{\sqrt{n}}=M_1w+M_2\frac{\varepsilon}{\sqrt{n}}.$$

The prediction risk is equal to $$1+(\hat{w}-w)^\intercal \Sigma (\hat{w}-w)=1+\left[(M_1-I_p)w+M_2\frac{\varepsilon}{\sqrt{n}}\right]^\intercal \Sigma \left[(M_1-I_p)w+M_2\frac{\varepsilon}{\sqrt{n}}\right]$$

$$=1+w^\intercal (M_1-I_p)\Sigma (M_1-I_p)w+\frac{1}{n}\varepsilon^\intercal M_2^\intercal \Sigma M_2\varepsilon+2\frac{\varepsilon^\intercal}{\sqrt{n}}M_2^\intercal\Sigma (M_1-I_p)w.$$

According to Lemma \ref{lp_lemma} for $q={(4+\eta)}/{2}$ we have \begin{equation}\begin{split}
\mathbb{E}\left[\left(w^\intercal (M_1-I_p)\Sigma (M_1-I_p)w -\frac{\alpha^2}{p}tr\left((M_1-I_p)^2\Sigma \right)\right)^q|M_1\right]=\\\mathcal{O} \left(\Norm{(M_1-I_p)\Sigma (M_1-I_p)}_F^q\frac{\alpha^{2q}}{p^q}\right)
=\mathcal{O}\left(p^{-\frac{q}{2}}\Norm{(M_1-I_p)\Sigma(M_1-I_p)}^q\right).
\end{split}
\end{equation} Since $\displaystyle \limsup \max_{1\leq i\leq p}\lambda_i=\limsup \Norm{S_n}\leq\limsup \norm{\Sigma}\norm{n^{-1}{ZZ^\intercal}}\leq \limsup{\norm{\Sigma}}(1+\sqrt{\gamma})^2$ almost surely, we conclude that almost surely $\sup_p \norm{\Sigma}\norm{M_1-I_p}<\infty.$ By Borel-Cantelli, since ${q}>2$, this implies that $$w^\intercal (M_1-I_p)\Sigma (M_1-I_p)w-\frac{\alpha^2}{p}tr\left((M_1-I_p)^2\Sigma\right)\xrightarrow{a.s.}0.$$

A similar argument shows that $$\frac{1}{n}\varepsilon^\intercal M_2^\intercal \Sigma M_2\varepsilon-\frac{1}{n}tr\left(M_2^\intercal \Sigma M_2\right)\xrightarrow{a.s.}0.$$

By Lemma \ref{lp_lemma} for $q=(4+\eta)/{2}$ and  $n^{-1/2}{\varepsilon^\intercal}M_2^\intercal\Sigma (M_1-I_p)w$ we get as before: $$\mathbb{E}\left[\left(\frac{\varepsilon^\intercal}{\sqrt{n}}M_2^\intercal\Sigma(M_1-I_p)w\right)^{q}|M_1,M_2\right]=\mathcal{O}\left(\frac{\alpha^q}{(pn)^{\frac{q}{2}}}\left[\norm{M_2^\intercal \Sigma (M_1-I_p)}_{F}^q|M_1,M_2\right]\right)$$
$$=\mathcal{O}\left(\frac{\alpha^q}{n^{\frac{q}{2}}}\norm{M_2^\intercal\Sigma (M_1-I_p)}^q\right).$$ So, again by Borell-Cantelli, we derive that $$\frac{\varepsilon^\intercal}{\sqrt{n}}M_2^\intercal \Sigma (M_1-I_p)w\xrightarrow{a.s.}0.$$

Finally, we see by Proposition \ref{gen_lp} $$\frac{1}{p}tr\left((M_1-I_p)^2\Sigma\right)=\frac{1}{p}tr\left(\left(h\left(S_n\right)\left(S_n\right)^{\frac{1}{2}}-I_p\right)^2\Sigma\right)\xrightarrow{a.s.}M_{\gamma,H}((\sqrt{x}h(x)-1)^2)$$
$$\frac{1}{n}tr\left( M_2^\intercal \Sigma M_2\right)=\frac{1}{n}tr\left(\Sigma M_2M_2^\intercal\right)=\frac{p}{n}\frac{1}{p}tr\left(\Sigma h\left(S_n\right)^2\right)\xrightarrow{a.s.}\gamma M_{\gamma,H}(h(x)^2).$$

Combining everything we get that the risk converges almost surely to $$M_{\gamma,H}((\sqrt{x}h(x)-1)^2)+\gamma M_{\gamma,H}(h(x)^2).$$

The proof is completed.

\end{proof}

The next result that we prove is the asymptotic formulas for the training error, namely Proposition \ref{train_err}.

\begin{proof}[Proof of Proposition \ref{train_err}]We have $$\frac{\norm{y-X^\intercal \hat{w}}^2}{n}=\frac{\norm{\varepsilon+X^\intercal w-X^\intercal \hat{w}}^2}{n}.$$

We have 
$
-\varepsilon+X^\intercal(\hat{w}(t)-w)=M_x w+M_{\varepsilon}\varepsilon,$


 where

\begin{equation}\begin{split}\label{mx}
M_x=X^\intercal\left(I-\exp\left(-t\left(\frac{XX^\intercal}{n}+\lambda\right)\right)\right)\left(\frac{XX^\intercal}{n}+\lambda\right)^{-1}\frac{XX^\intercal}{n}-X^\intercal\\
=\sqrt{n}\sum_{i=1}^p\left(\sqrt{\lambda_i}\frac{1-\exp{(-t(\lambda_i+\lambda))}}{\lambda_i+\lambda}\lambda_i-\sqrt{\lambda_i}\right)v_i u_i^\intercal.
\end{split}
\end{equation}

\begin{equation}\begin{split}\label{me}
M_{\varepsilon}=-I_n+X^\intercal\left(I-\exp\left(-t\left(\frac{XX^\intercal}{n}+\lambda\right)\right)\right)\left(\frac{XX^\intercal}{n}+\lambda\right)^{-1}\frac{X}{n}\\
=-1+\sum_{i=1}^p\left(\lambda_i\frac{1-\exp{(-t(\lambda+\lambda_i))}}{\lambda_i+\lambda}\right)v_iv_i^\intercal.
\end{split}
\end{equation}

Thus, the training error is $$E_n(\hat{w})=\frac{w^\intercal M_{x}^\intercal M_x w+2w^\intercal M_x^\intercal M_{\varepsilon}\varepsilon+\varepsilon^\intercal M_{\varepsilon}^\intercal M_{\varepsilon}\varepsilon}{n}.$$

The exact same argument based on Lemma \ref{lp_lemma} that we used in the previous proof implies that $$\frac{w^\intercal M_x^\intercal M_x w}{n}-\frac{\alpha^2}{p}tr\left(\frac{M_x^\intercal M_x}{n}\right)\xrightarrow{a.s.}0$$

$$\frac{w^\intercal M_x^\intercal M_{\varepsilon}\varepsilon}{n}\xrightarrow{a.s.}0$$

$$\frac{\varepsilon^\intercal M_{\varepsilon}^\intercal M_{\varepsilon}\varepsilon}{n}-\frac{1}{n}tr\left(\frac{M_{\varepsilon}^\intercal M_{\varepsilon}}{n}\right)\xrightarrow{a.s.}0$$

To finish the proof we use the fact that $$\frac{1}{p}tr\left(\frac{M_x^\intercal M_x}{n}\right)\xrightarrow{a.s.}\int x \left[(1-\exp{(-t(x+\lambda))})\frac{x}{x+\lambda}-1\right]^2 dF_{\gamma,H}(x),$$ which is clear from \eqref{mx} and the Marcenko-Pastur theorem, since $u_i$'s are the eigenvectors of ${XX^\intercal}/{n}$ and $\lambda_i$'s the corresponding eigenvalues.
Furthermore, $$\frac{1}{n}tr\left(\frac{M_{\varepsilon}^\intercal M_{\varepsilon}}{n}\right)\xrightarrow{a.s.}\int \left[(1-\exp{(-t(x+\lambda))})\frac{x}{x+\lambda}-1\right]^2 d\underline{F}_{\gamma,H}(x),$$ which follows from \eqref{me} by the same reasoning.

\end{proof}

\subsection{Proofs for Linear Discriminant Analysis}

Now we focus on the results that concern Discriminant Analysis. The assumptions are the same as in Section \ref{DiscriminantAnalysis}. First of all, we prove Proposition \ref{prop:mean_delta} for estimation of the mean vector.

\begin{proof}[Proof of Proposition \ref{prop:mean_delta}]
We will first prove the result for $\gamma<1,$ since minor changes are required to account for the mass at $0$ when $\gamma>1.$
We have $\hat{\delta}=\Sigma^{{1}/{2}}n^{-{1}/{2}}u+\delta,$ where $u\sim\mathcal{N}(0,I_n)$ is independent of the sample covariance matrix $\hat{\Sigma}.$ Using Lemma \ref{lp_lemma} we have: $$\Norm{\hat{\delta}^{(r)}-\delta}^2=\Norm{r(\hat{\Sigma})\left(\frac{\Sigma^{\frac{1}{2}}u}{\sqrt{n}}+\delta\right)-\delta}^2=\frac{tr\left(\Sigma r^2(\hat{\Sigma})\right)}{n}+\frac{\alpha^2}{p}\Norm{r(\hat{\Sigma})-I_p}_F^2+\omicron{(1)},$$ which (by Proposition \ref{gen_lp} and the Marcenko-Pastur theorem converges almost surely to $$\gamma M(r^2)+\alpha^2\int (r(x)-1)^2dF_{\gamma,H}=\int \gamma \frac{r(x)^2}{x(f(x)^2+g(x)^2)}+\alpha^2(r(x)-1)^2dF_{\gamma,H}.$$ The integrand is quadratic in $r$ and setting the derivative equal to 0 we see that it is minimized exactly for the function $r$ that we claimed. If $\gamma>1$ the only change in the proof is that we also need to minimize the contribution at 0, which equals $$\frac{r^2(0)}{\underline{m}(0)}+\alpha^2\frac{\gamma-1}{\gamma}(r(0)-1)^2.$$
\end{proof}

\bigskip

We now prove Proposition \ref{modif_lda}, a very important result that we will need.

\begin{proposition}\label{modif_lda}
Let $z_1,z_2\in\mathbb{C}^+$. Then, $$\frac{1}{p}tr\left(\Sigma \left(S_n-z_1\right)^{-1}\Sigma \left(S_n-z_2\right)^{-1}\right)\xrightarrow{a.s.}$$
$$-\frac{1}{\gamma z_1z_2\underline{m}(z_1)\underline{m}(z_2)}+\frac{\underline{m}(z_2)-\underline{m}(z_1)}{\gamma z_1z_2\underline{m}(z_1)^2\underline{m}(z_2)^2 (z_2-z_1)}.$$
\end{proposition}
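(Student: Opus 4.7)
To prove this, I would employ the standard two-step paradigm for computing the limit of a random trace functional. Setting $G(z) := (S_n - z)^{-1}$ and $F(z_1, z_2) := p^{-1}\operatorname{tr}(\Sigma G(z_1)\Sigma G(z_2))$, I would first show that $F(z_1, z_2) - \mathbb{E}F(z_1, z_2) \to 0$ almost surely, and then identify the limit of $\mathbb{E}F(z_1, z_2)$.

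For the concentration step, I would adapt the martingale technique already used in the proofs of Theorem \ref{main_1} and Proposition \ref{train_err}. Decomposing $F - \mathbb{E}F$ along the filtration $\mathcal{F}_k := \sigma(x_1,\ldots,x_k)$ and applying Sherman--Morrison to the rank-one update from $S_n - n^{-1}x_kx_k^\intercal$ to $S_n$, each martingale increment is controlled by $O(n^{-1})$ times a factor that is almost surely bounded uniformly in $n$ (using $\limsup \lVert S_n\rVert \leq h_2(1+\sqrt{\gamma})^2$ as already exploited in Proposition \ref{gen_lp}). Burkholder's inequality bounds higher moments of the sum, and Borel--Cantelli delivers the almost-sure convergence.

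For the limit of the expectation, I would apply Sherman--Morrison columnwise. Writing $G^{(i)}(z) = (S_n - n^{-1}x_ix_i^\intercal - z)^{-1}$ and using $G(z)x_i = G^{(i)}(z)x_i / d_i(z)$ with $d_i(z) = 1 + n^{-1}x_i^\intercal G^{(i)}(z) x_i$, the deterministic identity $S_nG(z) = I + zG(z)$ gives
\begin{equation*}
z_1 F(z_1,z_2) = -\frac{1}{p}\operatorname{tr}(\Sigma^2 G(z_2)) + \frac{1}{np}\sum_{i=1}^n x_i^\intercal G(z_1)\Sigma G(z_2)\Sigma x_i.
\end{equation*}
Expanding both resolvents via Sherman--Morrison, invoking the standard quadratic-form concentration $n^{-1}x_i^\intercal Ax_i \approx n^{-1}\operatorname{tr}(\Sigma A)$ for $A$ independent of $x_i$, and inserting the known limits of $p^{-1}\operatorname{tr}(\Sigma G(z))$ and $p^{-1}\operatorname{tr}(\Sigma^2 G(z))$ from Proposition \ref{gen_lp} (the latter via contour integration on $u(t)=t^2$ on a compact set containing $\operatorname{supp}H$), one obtains a linear self-consistent equation in the unknown limit $F_\infty(z_1,z_2)$.

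Solving the equation and simplifying via $1 + z\underline{m}(z) = \gamma(1 + zm(z))$ should produce the stated closed form. I expect the principal difficulty to be algebraic bookkeeping rather than probabilistic subtlety: the correction term $(\underline{m}(z_2)-\underline{m}(z_1))/[\gamma z_1z_2\underline{m}(z_1)^2\underline{m}(z_2)^2(z_2-z_1)]$ does \emph{not} arise from naively substituting the deterministic equivalent $T(z) = -z^{-1}(I + \underline{m}(z)\Sigma)^{-1}$ for each $G(z_k)$ (that substitution produces an inconsistent quantity for two-resolvent observables); it only emerges by retaining the full next-order Sherman--Morrison expansion and collapsing the resulting terms using the Marchenko--Pastur identity. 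As a sanity check, I would take $z_1 \to z_2 = z$: the second term becomes $\underline{m}'(z)/(\gamma z^2\underline{m}(z)^4)$ by L'Hôpital, giving the diagonal prediction $-1/(\gamma z^2\underline{m}(z)^2) + \underline{m}'(z)/(\gamma z^2\underline{m}(z)^4)$. This can be independently checked by a one-parameter Sherman--Morrison computation of $p^{-1}\operatorname{tr}((\Sigma G(z))^2)$, and matching the two confirms the coefficient of the correction; it also provides the bridge to the $h(0)$-dependent piece appearing in Proposition \ref{Prop_LDA} for $\gamma>1$.
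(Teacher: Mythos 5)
Your concentration step is sound and, while the paper uses Hanson--Wright plus a Borel--Cantelli argument rather than a martingale/Burkholder decomposition, either route works. Your sanity check at $z_1 \to z_2$ is also correct. The difficulty is in the identification of the mean, and there is a genuine gap.

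Starting from $z_1 F(z_1,z_2) = -p^{-1}\operatorname{tr}(\Sigma^2 G(z_2)) + (np)^{-1}\sum_i x_i^\intercal G(z_1)\Sigma G(z_2)\Sigma x_i$ and expanding both resolvents by Sherman--Morrison, the leading term after quadratic-form concentration is
\begin{equation*}
\frac{1}{np}\sum_i \frac{1}{d_i(z_1)}\,x_i^\intercal G^{(i)}(z_1)\Sigma G^{(i)}(z_2)\Sigma\, x_i \;\approx\; -z_1\underline{m}(z_1)\cdot\frac{1}{p}\operatorname{tr}\!\bigl(\Sigma G(z_1)\Sigma G(z_2)\Sigma\bigr),
\end{equation*}
which is a \emph{three}-$\Sigma$ observable. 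It is not expressible in terms of $F(z_1,z_2)$, $p^{-1}\operatorname{tr}(\Sigma G)$, or $p^{-1}\operatorname{tr}(\Sigma^2 G)$, so the ``linear self-consistent equation in $F_\infty$'' you announce does not actually close: it has a second unknown. Iterating your identity on that new observable produces a four-$\Sigma$ trace, and so on; retaining higher Sherman--Morrison orders does not cure this because each concentration adds a $\Sigma$ factor. The structural reason is that you placed the $\Sigma$ factors into the starting identity \emph{and} the concentrations generate more, so the $\Sigma$-count is never balanced.

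The paper avoids this by computing the $\Sigma$-free quantity $p^{-1}\operatorname{tr}(S_n R(z_1)S_n R(z_2))$ two ways. On one hand it equals $p^{-1}\operatorname{tr}\bigl((I+z_1R(z_1))(I+z_2R(z_2))\bigr)$, whose limit is an explicit function of $m(z_1),m(z_2)$ by Marchenko--Pastur. On the other hand, writing $S_n = n^{-1}\sum_k x_kx_k^\intercal$ turns it into the double sum $\frac{1}{p}\sum_{k,j}a_{kj}$ with $a_{kj}=\operatorname{tr}\bigl(\tfrac{x_kx_k^\intercal}{n}R(z_1)\tfrac{x_jx_j^\intercal}{n}R(z_2)\bigr)$. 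The diagonal $\sum_j a_{jj}$ concentrates to $\gamma^{-1}(1+z_1\underline{m}(z_1))(1+z_2\underline{m}(z_2))$; for the off-diagonal, leave-two-out resolvents and two quadratic-form concentrations (one in $x_k$, one in $x_j$) produce exactly two $\Sigma$ factors, giving $\frac{1}{p}\sum_{k\neq j}a_{kj}\approx z_1^2z_2^2\underline{m}(z_1)^2\underline{m}(z_2)^2\cdot\frac{1}{p}\operatorname{tr}(\Sigma R(z_1)\Sigma R(z_2))$. Equating and solving yields the stated formula. The essential move you are missing is to start from the $S_n$--$S_n$ trace, not the $\Sigma$--$\Sigma$ trace, so that the two concentrations supply precisely the two $\Sigma$ factors needed to hit the target and nothing more.
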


\begin{proof}
Our proof is similar to the proof of the main theorem in \cite{ledoit2011eigenvectors}, but it is way more complicated due to the nature of the functionals involved in the statement. 

Let $$R(z)=\left(S_n-z\right)^{-1}, R_{jk}(z)=\left(\frac{1}{n}\sum_{i\neq j,k}x_ix_i^\intercal-z\right)^{-1}.$$

We also define $$a_{kj}=tr\left(\frac{x_kx_k^\intercal}{n}R(z_1)\frac{x_jx_j^\intercal}{n}R(z_2)\right).$$

\textbf{The case $j=k.$}

For $j=k$, we have $$a_{jj}=\frac{x_j^\intercal R(z_1)x_j}{n}\frac{x_j^\intercal R(z_2)x_j}{n}.$$ Using $R_{jj}(z_1)-R(z_1)=R_{jj}(z_1)\frac{x_jx_j^\intercal}{n}R{(z_1)}$ we derive $$\frac{x_j^\intercal R(z_1)x_j}{n}=1-\frac{1}{1+\frac{x_j^\intercal R_{jj}x_j}{n}}.$$ In the proof of Lemma 2.2 in the reference above the authors prove that $$\max_{j\in \{1,\cdots,n\}}\abs{\frac{x_j^\intercal R_{jj}(z_1)x_j}{n}-\frac{1}{n}tr\left(R_{jj}(z_1)\Sigma\right)}\xrightarrow{a.s.}0.$$

As a result, since by the reference above $$\max_{1\leq j\leq n}\abs{\frac{1}{n}tr\left(R_{jj}(z_1)\Sigma\right)+1+\frac{1}{z_1\underline{m}(z_1)}}\xrightarrow{a.s.}0,$$ we have $$\max_{j\in\{1,\cdots,n\}}\abs{a_{jj}-(1+z_1\underline{m}(z_1))(1+z_2\underline{m}(z_2))}\xrightarrow{a.s.}0.$$ We conclude that

\begin{equation}\label{term1}\frac{1}{p}\sum_{j=1}^na_{jj}\xrightarrow{a.s.}\frac{1}{\gamma}(1+z_1\underline{m}(z_1))(1+z_2\underline{m}(z_2)).
\end{equation}

\textbf{The case $j\neq k.$}

For $j\neq k$ we have $$R(z_1)-R_{kj}(z_1)=-n^{-1}R(z_1){x_kx_k^\intercal}R_{kj}(z_1)-n^{-1}R(z_1){x_jx_j^\intercal}R_{kj}(z_1)$$ and, consequently,
\begin{equation}\begin{split}
{\frac{x_k^\intercal R(z_1)x_j}{n}-\frac{x_k^\intercal R_{kj}(z_1)x_j}{n}}\\=-{\frac{x_k^\intercal R(z_1)x_k}{n}\frac{x_k^\intercal R_{kj}(z_1)x_j}{n}-\frac{x_k^\intercal R(z_1) x_j}{n}\frac{x_j^\intercal R_{kj}(z_1)x_j}{n}}\\\Rightarrow{\frac{x_k^\intercal R(z_1)x_j}{n}}=\frac{\frac{x_k^\intercal R_{kj}(z_1)x_j}{n}(1-\frac{x_k^\intercal R(z_1)x_k}{n})}{1+\frac{x_j^\intercal R_{kj}(z_1)x_j}{n}}\\=\frac{x_k^\intercal R_{kj}(z_1)x_j}{n}\frac{1}{\left(1+\frac{x_j^\intercal R_{kj}(z_1)x_j}{n}\right)\left(1+\frac{x_k^\intercal R_{kk}(z_1)x_k}{n}\right)}.
\end{split}\end{equation}

We conclude that, for $j\neq k$, 
\begin{equation}\begin{split}\label{ajnotk}
a_{kj}=\frac{x_k^\intercal R_{kj}(z_1)x_j}{n}\frac{x_j^\intercal R_{kj}(z_2)x_k }{n} A_{kj}=tr\left(\frac{x_kx_k^\intercal}{n}R_{kj}(z_1)\frac{x_jx_j^\intercal}{n}R_{kj}(z_2)\right)A_{kj},\\
A_{kj}=\frac{1}{\left(1+\frac{x_j^\intercal R_{kj}(z_1)x_j}{n}\right)\left(1+\frac{x_k^\intercal R_{kk}(z_1)x_k}{n}\right)}\frac{1}{\left(1+\frac{x_j^\intercal R_{kj}(z_2)x_j}{n}\right)\left(1+\frac{x_k^\intercal R_{kk}(z_2)x_k}{n}\right)}.
\end{split}\end{equation}


Using the Hanson-Wright inequality from \cite{vershynin_hanson_wright} for the real and imaginary parts of $R_{kj}(z_1)$ we get that there exist absolute constants $C,c>0$ such that $$\mathbb{P}(\abs{\frac{x_j^\intercal R_{k j}(z_1)x_j}{n}-\frac{1}{n}tr\left(\Sigma R_{kj}(z_1)\right)}>t|R_{kj})\leq C\exp{\left(-c\min\left(\frac{n^2t^2}{\norm{R_{j k}}_F^2},\frac{nt}{\norm{R_{jk}}}\right)\right)}.$$

Since $\norm{R_{kj}}\leq {Im(z_1)^{-1}},$ we conclude by the union bound that $$\mathbb{P}(\max_{1\leq k,j\leq n}\abs{\frac{x_j^\intercal R_{kj}(z_1)x_j}{n}-\frac{1}{n}tr\left(\Sigma R_{kj}(z_1)\right)}\geq t)$$
$$\leq Cn^2\exp{(-c\min\left(Im(z_1)^2nt^2,Im(z_1)nt\right))}.$$ We conclude that \begin{equation}\begin{split}\label{const1}
\max_{1\leq k,j\leq n}\abs{\frac{x_j^\intercal R_{kj}(z_1)x_j}{n}-\frac{1}{n}tr\left(\Sigma R_{kj}(z_1)\right)}\xrightarrow{a.s.}0.
\end{split}\end{equation}

By Lemma 2.6 in \cite{silverstein1995empirical} we have $$\frac{1}{n}\abs{tr\left(\Sigma R(z_1)-\Sigma R_{kk}(z_1)\right)}\leq \frac{\norm{\Sigma}}{nIm(z_1)}.$$ Applying this if we omit $x_k$ we get $$\frac{1}{n}\abs{tr\left(\Sigma R_{kk}(z_1)-\Sigma R_{kj}(z_1)\right)}\leq \frac{\norm{\Sigma}}{nIm(z_1)}.$$

Combining the last two inequalities gives
\begin{equation}\label{const2}
\frac{1}{n}\abs{tr(\Sigma R(z_1)-\Sigma R_{kj}(z_1))}\leq \frac{2\norm{\Sigma}}{nIm(z_1)}.
\end{equation}

Using again $$\frac{1}{n}tr(\Sigma R(z_1))\xrightarrow{a.s.}-\frac{1+z_1\underline{m}(z_1)}{z_1\underline{m}(z_1)},$$ \eqref{const1} and \eqref{const2} imply $$\max_{1\leq k,j\leq n}\abs{\frac{x_j^\intercal R_{kj}(z_1)x_j}{n}+1+\frac{1}{z_1\underline{m}(z_1)}}\xrightarrow{a.s.}0.$$

Of course, the same convergence is true if we replace $z_1$ by $z_2.$ We conclude that for $j\neq k$ 

\begin{equation}\label{Aij}
\max_{1\leq k,j\leq n}\abs{A_{kj}-z_1^2z_2^2\underline{m}(z_1)^2\underline{m}(z_2)^2}\xrightarrow{a.s.}0.\end{equation}

We now use the Hanson-Wright inequality twice and the exact same argument as above to get

\begin{equation}\label{const_last}
\max_{1\leq k\leq n}\abs{\frac{1}{p}tr\left(\sum_{j=1,j\neq k}^nR_{kj}(z_1)\frac{x_jx_j^\intercal}{n}R_{kj}(z_2)x_kx_k^\intercal\right)-\frac{1}{pn}tr\left(\Sigma R(z_1)\Sigma R(z_2)\right)}\xrightarrow{a.s.}0.
\end{equation}

Combining \eqref{Aij} and \eqref{const_last} we get \begin{equation}\label{term2}
\frac{1}{p}\sum_{k\neq j}a_{kj}=z_1^2z_2^2\underline{m}(z_1)^2\underline{m}(z_2)^2\frac{1}{p}tr\left(\Sigma R(z_1)\Sigma R(z_2)\right)+\omicron{(1)}.
\end{equation}

Let $$E=\frac{1}{p}\sum_{1\leq k,j\leq n}a_{kj}=\frac{1}{p}tr\left((I_p+z_1R(z_1)(I_p+z_2R(z_2)))\right).$$ Then, by the Marcenko-Pastur theorem we know that $$E\xrightarrow{a.s.}1+z_1m(z_1)+z_2m(z_2)+\frac{z_1z_2(m(z_2)-m(z_1))}{z_2-z_1}.$$ Notice that here we used the partial fraction decomposition ${(x-z_1)^{-1}(x-z_2)^{-1}}=(z_2-z_1)^{-1}\left({(x-z_2)^{-1}}-{(x-z_1)^{-1}}\right),$ which immplies that $${R(z_1)R(z_2)=\frac{1}{z_2-z_1}\left(R(z_2)-R(z_1)\right)}.$$

We conclude by \eqref{term1} and \eqref{term2} that $$E=\frac{1}{\gamma}(1+z_1\underline{m}(z_1))(1+z_2\underline{m}(z_2))+z_1^2z_2^2\underline{m}(z_1)^2\underline{m}(z_2)^2\frac{1}{p}tr\left(\Sigma R(z_1)\Sigma R(z_2)\right)+\omicron{(1)}.$$ The last two equations imply that \begin{equation}\begin{split}\label{master1}
\lim_{p\rightarrow{\infty}}z_1^2z_2^2\underline{m}(z_1)^2\underline{m}(z_2)^2\frac{1}{p}tr\left(\Sigma R(z_1)\Sigma R(z_2)\right)\\
=-\frac{1}{\gamma}(1+z_1\underline{m}(z_1))(1+z_2\underline{m}(z_2))+1+z_1m(z_1)+z_2m(z_2)+\frac{z_1z_2(m(z_2)-m(z_1))}{z_2-z_1}.
\end{split}\end{equation}

Using the equation for the companion Stieltjes transform $$\gamma(1+zm(z))=1+z\underline{m}(z),$$ we can rewrite \eqref{master1} after a straightforward manipulation: \begin{equation}\label{master2}\begin{split}\lim_{p\rightarrow{\infty}}z_1^2z_2^2\underline{m}(z_1)^2\underline{m}(z_2)^2\frac{1}{p}tr\left(\Sigma R(z_1)\Sigma R(z_2)\right)\\
=-\frac{1}{\gamma}z_1z_2\underline{m}(z_1)\underline{m}(z_2)+\frac{z_1z_2(\underline{m}(z_2)-\underline{m}(z_1))}{\gamma (z_2-z_1)}.
\end{split}\end{equation}

This completes the proof.
\end{proof}

This gives us the main tool in proving Proposition \ref{Prop_LDA}.

\begin{proof}[Proof of Proposition \ref{Prop_LDA}]
As in Proposition \ref{gen_lp} we only need to prove the result for analytic functions $h$ in some open set that contains $supp(F_{\gamma,H}).$
We write using the Cauchy integral formula:$$\frac{1}{p}tr\left(\Sigma h\left(S_n\right)\Sigma h\left(S_n\right)\right)=\left(\frac{1}{2\pi i}\right)^2\oint_{\Gamma}\oint_{\Gamma'}h(z_1)h(z_2)F_{p}(z_1,z_2)dz_1dz_2,$$ where

$$F_p(z_1,z_2)=\frac{1}{p}tr\left(\Sigma\left(S_n-z_1\right)^{-1}\Sigma \left(S_n-z_2\right)^{-1}\right)$$ and $\Gamma,\Gamma'$ are simple closed curves as the proof of Proposition \ref{gen_lp} which we take to be non-intersecting (assume without loss of generality that $\Gamma'$ contains $\Gamma$ in the interior region). Since $$\lim_{p\rightarrow{\infty}}F_{p}(z_1,z_2)=F(z_1,z_2)=-\frac{1}{\gamma z_1z_2\underline{m}(z_1)\underline{m}(z_2)}+\frac{\underline{m(z_2)}-\underline{m}(z_1)}{\gamma z_1z_2\underline{m}(z_1)^2\underline{m}(z_2)^2(z_2-z_1)},$$ the same reasoning as in Proposition \ref{gen_lp} gives $$\frac{1}{p}tr\left(\Sigma h\left(S_n\right)\Sigma h\left(S_n\right)\right)\xrightarrow{a.s.}\left(\frac{1}{2\pi i}\right)^2\oint_{\Gamma}\oint_{\Gamma'}h(z_1)h(z_2)F(z_1,z_2)dz_1dz_2.$$



We can rewrite 

\begin{equation}\begin{split}
\left(\frac{1}{2\pi i}\right)^2\oint_{\Gamma}\oint_{\Gamma'}h(z_1)h(z_2)F(z_1,z_2)dz_1dz_2\\=\left(\frac{1}{2\pi i}\right)^2\oint_{\Gamma}\oint_{\Gamma'}h(z_1)h(z_2)\overline{F}(z_1,z_2)dz_1dz_2-\frac{1}{\gamma}\left(\frac{1}{2\pi i}\oint_{\Gamma} \frac{h(z)}{z\underline{m}(z)}dz\right)^2,
\end{split}\end{equation} where $$\overline{F}(z_1,z_2)=\frac{\underline{m}(z_2)-\underline{m}(z_1)}{\gamma z_1z_2\underline{m}(z_1)^2\underline{m}(z_2)^2(z_2-z_1)}.$$

By the calculation in the first part we know that  \begin{equation}\label{TERM2}\begin{split}\frac{1}{2\pi i}\oint_{\Gamma}-\frac{h(z)}{z\underline{m}(z)}dz=\frac{1}{2\pi i}\oint _{\Gamma}h(z)\frac{1+z\underline{m}(z)}{(-z\underline{m}(z))}dz\\=\int \frac{h(x)}{ \pi x (f(x)^2+g(x)^2)}g(x)dx+I_{\gamma>1}\frac{h(0)}{\underline{m}(0)}.
\end{split}\end{equation}




As a consequence, to prove Proposition \ref{Prop_LDA} it is enough to prove that \begin{equation}\label{left_to_prove}\begin{split}\left(\frac{1}{2\pi i}\right)^2\oint_{\Gamma}\oint_{\Gamma'}h(z_1)h(z_2)\overline{F}(z_1,z_2)dz_1dz_2\\=\int \int \overline{K}(x,y)h(x)h(y)dxdy+\int \frac{h(x)^2g(x)}{\gamma\pi x^2(f(x)^2+g(x)^2)^2}dx\\+\left[\frac{\underline{m}'(0)}{\gamma\underline{m}(0)^4}h(0)^2+\frac{2h(0)}{\gamma\underline{m}(0)^2}\int h(x)\frac{f(x)^2+g(x)^2-2f(x)\underline{m}(0)}{\pi x^2(f(x)^2+g(x)^2)^2}g(x)dx\right]I_{\gamma>1}.\end{split}
\end{equation}

Here $$\overline{K}(x,y)=K(x,y)+\frac{g(x)g(y)}{\gamma\pi^2xy(f(x)^2+g(x)^2)(f(y)^2+g(y)^2)}.$$

Following the same idea as in the proof of Proposition \ref{gen_lp}, we can shrink the curve $\Gamma'$ down to the real axis to get \begin{equation}\label{kern}\begin{split}
A(z_2)=\frac{1}{2\pi i}\oint_{\Gamma'}h(z_1)\overline{F}(z_1,z_2)dz_1\\=\lim_{\epsilon\downarrow{0}}\int \frac{h(x)}{2\pi i}\left[\overline{F}(x-i\epsilon,z_2)-\overline{F}(x+i\epsilon,z_2)\right]dx+\frac{h(0)(\underline{m}(z_2)-\underline{m}(0))}{\gamma z_2^2\underline{m}(0)^2\underline{m}(z_2)^2}I_{\gamma>1}\\=\int \frac{2\underline{m}(z_2)f(x)g(x)-g(x)(f(x)^2+g(x)^2)}{\gamma\pi xz_2(z_2-x)(f(x)^2+g(x)^2)^2\underline{m}(z_2)^2}h(x)dx+\frac{h(0)(\underline{m}(z_2)-\underline{m}(0))}{\gamma z_2^2\underline{m}(0)^2\underline{m}(z_2)^2}I_{\gamma>1}.
\end{split}\end{equation}

The first term above comes from the support of $F_{\gamma,H}$ on $(0,\infty)$, while the second is from the mass at $0$ when $\gamma>1$. Let us focus on the contribution of the second term in the case $\gamma>1$. We in turn shrink $\Gamma$ to calculate the contribution of the second term \begin{equation}\label{gamma_term}\begin{split}
\frac{1}{2\pi i}\oint_{\Gamma} h(z_2)\frac{h(0)(\underline{m}(z_2)-\underline{m}(0))}{\gamma z_2^2\underline{m}(0)^2\underline{m}(z_2)^2}dz_2=h(0)^2\frac{\underline{m}'(0)}{\gamma \underline{m}(0)^4}\\+\frac{1}{\pi}\lim_{\epsilon\downarrow{0}}\int Im \left[h(x-i\epsilon)\frac{h(0)(\underline{m}(x-i\epsilon)-\underline{m}(0))}{\gamma(x-i\epsilon)^2\underline{m}(0)^2\underline{m}(x-i\epsilon)^2} \right]dx\\=h(0)^2\frac{\underline{m}'(0)}{\gamma \underline{m}(0)^4}+\frac{h(0)}{\gamma\underline{m}(0)^2}\int h(x)\frac{f(x)^2+g(x)^2-2f(x)\underline{m}(0)}{\pi x^2(f(x)^2+g(x)^2)^2}g(x) dx.
\end{split}\end{equation}

Comparing \eqref{left_to_prove} and \eqref{gamma_term} we conclude that it remains to prove that \begin{equation}\begin{split}
\frac{1}{2\pi i}\oint_{\Gamma}\int \frac{2\underline{m}(z_2)f(x)g(x)-g(x)(f(x)^2+g(x)^2)}{\gamma\pi xz_2(z_2-x)(f(x)^2+g(x)^2)^2\underline{m}(z_2)^2} h(z_2)h(x)dxdz_2\\=\int\int \overline{K}(x,y)h(x)h(y) dxdy+\int h(x)^2\frac{g(x)}{\gamma\pi x^2(f(x)^2+g(x)^2)^2}dx\\+\left[\frac{h(0)}{\gamma\underline{m}(0)^2}\int h(x)\frac{f(x)^2+g(x)^2-2f(x)\underline{m}(0)}{\pi x^2(f(x)^2+g(x)^2)^2}g(x) dx\right]I_{\gamma>1}\\.
\end{split}\end{equation}

Changing the order of integration we see that the second term comes by the contribution at $0$ of the integrand in the case $\gamma>1$ by the Cauchy integral formula. For the rest of the proof we focus on the contribution of the rest of the support of $F_{\gamma,H}.$

There are two types of terms we need to consider here. First of all, ignoring the singularity at $z_2=x$ of the integrand and shrinking $\Gamma$ as before gives a term equal to $$\int\int\frac{1}{\pi} Im\left[\frac{2\underline{m}(y-i\epsilon)f(x)g(x)-g(x)(f(x)^2+g(x)^2)}{\gamma\pi xy(y-x)(f(x)^2+g(x)^2)^2\underline{m}(y-i\epsilon)^2} \right] h(x)h(y) dx dy$$
$$=\int\int \overline{K}(x,y)h(x)h(y)dx dy,$$ as we see by a straightforward calculation.

The last thing we need to take into  account  is the contribution of the singularity of $\frac{1}{z_2-x}$ at $x.$ Let $$k(z_2;x)=\frac{2\underline{m}(z_2)f(x)g(x)-g(x)(f(x)^2+g(x)^2)}{\gamma\pi xz_2(f(x)^2+g(x)^2)^2\underline{m}(z_2)^2}.$$ An important observation is that $k(\overline{z_2};x)=\overline{k(z_2;x)}\forall z_2\in\mathbb{C}-\mathbb{R}.$ The contribution of the singularity at $x$ is equal to

\begin{equation}\begin{split}
\lim_{r\downarrow{0}}\frac{1}{2\pi i}\int_0^{\pi}\frac{k(x+re^{i\theta};x)}{re^{i\theta}}h(x)h(x+re^{i\theta}) rie^{i\theta}dr\\+\frac{1}{2\pi i}\int_{\pi}^{2\pi}\frac{k(x+re^{i\theta};x)}{re^{i\theta}}h(x)h(x+re^{i\theta})rie^{i\theta}dr
\\=\lim_{r\downarrow{0}}\frac{1}{2\pi }\int_0^{\pi}\left[k(x+re^{i\theta};x)+k(x-re^{i\theta};x)\right] h(x)^2dx.
\end{split}
\end{equation}

To finish the proof we use dominated convergence and the fact that $$\lim_{r\downarrow{0}}Re\left[k(x+re^{i\theta};x)\right]=\frac{g(x)\lim_{r\downarrow{0}}Re\left[\frac{2\underline{m}(x+re^{i\theta})f(x)-f(x)^2-g(x)^2}{\underline{m}(x+re^{i\theta})^2}\right]}{\gamma\pi x^2(f(x)^2+g(x)^2)^2}$$ and (for $\theta\in [0,\pi]$)
\begin{equation}\begin{split}
\lim_{r\downarrow{0}}Re\left[\frac{2\underline{m}(x+re^{i\theta})f(x)-f(x)^2-g(x)^2}{\underline{m}(x+re^{i\theta})^2}\right]\\=Re\left[\frac{2(f(x)+ig(x))f(x)-f(x)^2-g(x)^2}{(f(x)+ig(x))^2}\right]=1.
\end{split}
\end{equation}

As a consequence, $$\lim_{r\downarrow{0}}Re\left[k(x+re^{i\theta};x)\right]=\frac{g(x)}{\gamma\pi x^2(f(x)^2+g(x)^2)^2}.$$

\end{proof}

We are now ready to prove the main theorem of Section \ref{DiscriminantAnalysis}, namely Theorem  \ref{main_discr}. With the machinery we have developed above, the proof is going to be rather straightforward. Since all of the concentration of measure results for quadratic forms that we will need in the next proof have been used a great number of times throughout the paper, for instance in the proof of Theorem \ref{main_1} and Proposition \ref{modif_lda}, we will omit some details when we use them.

\begin{proof}[Proof of Theorem \ref{main_discr}]

Following the notation of Section \ref{DiscriminantAnalysis} we have $\displaystyle\hat{\delta}=\delta+\Sigma^{{1}/{2}}{u}/{\sqrt{n}},$ where $u\sim \mathcal{N}(0,I_p)$ is independent of the sample covariance matrix $\hat{\Sigma}$. If we have a new data point $x=\Sigma^{{1}/{2}}z+\delta$, then the point is missclassified, using a shrinkage function $h$, if $$\hat{\delta}^\intercal h\left(\hat{\Sigma}\right)\Sigma^{\frac{1}{2}}z\leq -\hat{\delta}^\intercal h\left(\hat{\Sigma}\right)\delta.$$ This happens with probability \begin{equation}\label{error_rate_LDA}\Phi\left(-\frac{\hat{\delta}^\intercal h\left(\hat{\Sigma}\right)\delta }{{\norm{\Sigma^{\frac{1}{2}}h\left(\hat{\Sigma}\right)\hat{\delta}}}}\right),\end{equation} which is also, by symmetry, the missclassification rate of Discriminant Analysis in general.

Notice that $$\frac{u^\intercal\Sigma^{\frac{1}{2}}}{\sqrt{n}}h\left(\hat{\Sigma}\right)\delta\xrightarrow{a.s.}0,$$ since $u,\delta$ are independent (for example, by a $\frac{(4+\eta)}{2}$-th moment argument and Lemma \ref{lp_lemma} as in the proof of Theorem \ref{main_1}). This implies that $$\hat{\delta}^\intercal h\left(\hat{\Sigma}\right) \delta-{\delta}^\intercal h\left(\hat{\Sigma}\right) \delta\xrightarrow{a.s.}0.$$

Before we continue we observe that because $\hat{\Sigma}$ is a rank 1 perturbation of $S_n$ and they are both independent of $\delta,$ we can replace $\hat{\Sigma}$ by $S_n$ in all the asymptotics that follow and $u$ by a standard normal random variable in $\mathbb{R}^n)$.

By Lemma \ref{lp_lemma} and Borel-Cantelli as before, the above convergence result implies that \begin{equation}\label{fin_term1}\hat{\delta}^\intercal h\left(S_n\right)\delta-\frac{\alpha^2}{p}tr\left(h\left(S_n\right)\right)\xrightarrow{a.s.}0\Rightarrow{\hat{\delta}^\intercal h\left(S_n\right)}\delta\xrightarrow{a.s.}\alpha^2\int h(x)dF_{\gamma,H}(x).\end{equation}

Now we turn our focus to the asymptotics of $$\norm{\Sigma^{\frac{1}{2}}h\left(S_n\right)\hat{\delta}}^2.$$ Expanding the square of the norm we have \begin{equation}\label{fin_term2}\begin{split}\norm{\Sigma^{\frac{1}{2}}h\left(S_n\right)\hat{\delta}}^2=\delta^\intercal h\left(S_n\right)\Sigma h\left(S_n\right)\delta+\frac{1}{n}u^\intercal \Sigma^{\frac{1}{2}}h\left(S_n\right)\Sigma h\left(S_n\right) \Sigma^{\frac{1}{2}}u\\+2\delta^\intercal h\left(S_n\right) \Sigma h\left(S_n\right) \frac{\Sigma^{\frac{1}{2}}u}{\sqrt{n}}. \end{split}\end{equation}

We have the following convergence results, all of which follow from Lemma \ref{lp_lemma} and Borel-Cantelli exactly in the same fashion as earlier:

\begin{equation}\label{norm_converg1}\begin{split}
\delta^\intercal h\left(S_n\right) \Sigma h\left(S_n\right) \frac{\Sigma^{\frac{1}{2}}u}{\sqrt{n}}\xrightarrow{a.s.}0
\end{split}
\end{equation}

\begin{equation}\label{norm_converg2}\begin{split}
\delta^\intercal h\left(S_n\right)\Sigma h\left(S_n\right)\delta -\frac{\alpha^2}{p}tr\left(\Sigma h\left(S_n\right)^2\right)\xrightarrow{a.s.}0\\ \Rightarrow{\delta^\intercal h\left(S_n\right)\Sigma h\left(S_n\right)\delta\xrightarrow{a.s.}\alpha^2M_{\gamma,H}(h^2)}
\end{split}
\end{equation}

\begin{equation}\label{norm_converg3}\begin{split}
\frac{1}{n}u^\intercal \Sigma^{\frac{1}{2}}h\left(S_n\right)\Sigma h\left(S_n\right) \Sigma^{\frac{1}{2}}u-\frac{1}{n}tr\left(\Sigma h\left(S_n\right)\Sigma h\left(S_n\right)\right)\xrightarrow{a.s.}0\\\Rightarrow{\frac{1}{n}u^\intercal \Sigma^{\frac{1}{2}}h\left(S_n\right)\Sigma h\left(S_n\right) \Sigma^{\frac{1}{2}}u}\xrightarrow{a.s.}\gamma T_{\gamma,H}(h)
\end{split}\end{equation}

Using \eqref{norm_converg1},\eqref{norm_converg2},\eqref{norm_converg3}, we conclude from \eqref{fin_term2} that $$\norm{\Sigma^{\frac{1}{2}}h\left(S_n\right)\hat{\delta}}^2\xrightarrow{a.s.}\gamma T_{\gamma,H}(h)+\alpha^2M_{\gamma,H}(h^2).$$

The last result together with \eqref{fin_term1} and \eqref{error_rate_LDA} imply Theorem \ref{main_discr}.

\end{proof}

\bibliographystyle{plainnat}
\bibliography{regular}

\begin{thebibliography}{46}
\providecommand{\natexlab}[1]{#1}
\providecommand{\url}[1]{\texttt{#1}}
\expandafter\ifx\csname urlstyle\endcsname\relax
  \providecommand{\doi}[1]{doi: #1}\else
  \providecommand{\doi}{doi: \begingroup \urlstyle{rm}\Url}\fi

\bibitem[Advani and Saxe(2017)]{learning_curves}
Madhu~S Advani and Andrew~M Saxe.
\newblock High-dimensional dynamics of generalization error in neural networks.
\newblock \emph{arXiv preprint arXiv:1710.03667}, 2017.

\bibitem[Ali et~al.(2018)Ali, Kolter, and Tibshirani]{ali2018continuous}
Alnur Ali, J~Zico Kolter, and Ryan~J Tibshirani.
\newblock A continuous-time view of early stopping for least squares.
\newblock \emph{arXiv preprint arXiv:1810.10082}, 2018.

\bibitem[Bai and Silverstein(1998)]{bai1998no}
Zhi-Dong Bai and Jack~W Silverstein.
\newblock No eigenvalues outside the support of the limiting spectral
  distribution of large-dimensional sample covariance matrices.
\newblock \emph{The Annals of Probability}, 26\penalty0 (1):\penalty0 316--345,
  1998.

\bibitem[Bai and Silverstein(2008)]{bai2008clt}
Zhidong~D Bai and Jack~W Silverstein.
\newblock Clt for linear spectral statistics of large-dimensional sample
  covariance matrices.
\newblock In \emph{Advances In Statistics}, pages 281--333. World Scientific,
  2008.

\bibitem[Bayati and Montanari(2011)]{bayati}
Mohsen Bayati and Andrea Montanari.
\newblock The lasso risk for gaussian matrices.
\newblock \emph{IEEE Transactions on Information Theory}, 58\penalty0
  (4):\penalty0 1997--2017, 2011.

\bibitem[Bengio(2012)]{bengio}
Yoshua Bengio.
\newblock Practical recommendations for gradient-based training of deep
  architectures.
\newblock In \emph{Neural networks: Tricks of the trade}, pages 437--478.
  Springer, 2012.

\bibitem[Bickel et~al.(2004)Bickel, Levina, et~al.]{bickel2004some}
Peter~J Bickel, Elizaveta Levina, et~al.
\newblock Some theory for fisher's linear discriminant function,naive bayes',
  and some alternatives when there are many more variables than observations.
\newblock \emph{Bernoulli}, 10\penalty0 (6):\penalty0 989--1010, 2004.

\bibitem[Bickel et~al.(2008)Bickel, Levina, et~al.]{bickel2008covariance}
Peter~J Bickel, Elizaveta Levina, et~al.
\newblock Covariance regularization by thresholding.
\newblock \emph{The Annals of Statistics}, 36\penalty0 (6):\penalty0
  2577--2604, 2008.

\bibitem[Bramhandkar(1989)]{LDA_finance}
Alka~J Bramhandkar.
\newblock Discriminant analysis: applications in finance.
\newblock \emph{Journal of Applied Business Research (JABR)}, 5\penalty0
  (2):\penalty0 37--41, 1989.

\bibitem[Cand{\`e}s and Sur(2018)]{candes2018phase}
Emmanuel~J Cand{\`e}s and Pragya Sur.
\newblock The phase transition for the existence of the maximum likelihood
  estimate in high-dimensional logistic regression.
\newblock \emph{arXiv preprint arXiv:1804.09753}, 2018.

\bibitem[Candes et~al.(2006)Candes, Romberg, and Tao]{candes_tao}
Emmanuel~J Candes, Justin~K Romberg, and Terence Tao.
\newblock Stable signal recovery from incomplete and inaccurate measurements.
\newblock \emph{Communications on Pure and Applied Mathematics: A Journal
  Issued by the Courant Institute of Mathematical Sciences}, 59\penalty0
  (8):\penalty0 1207--1223, 2006.

\bibitem[Dicker and Erdogdu(2016)]{dicker2016maximum}
Lee~H Dicker and Murat~A Erdogdu.
\newblock Maximum likelihood for variance estimation in high-dimensional linear
  models.
\newblock In \emph{Artificial Intelligence and Statistics}, pages 159--167,
  2016.

\bibitem[Dobriban(2015)]{dobriban2015efficient}
Edgar Dobriban.
\newblock Efficient computation of limit spectra of sample covariance matrices.
\newblock \emph{Random Matrices: Theory and Applications}, 4\penalty0
  (04):\penalty0 1550019, 2015.

\bibitem[Dobriban and Wager(2018)]{wager}
Edgar Dobriban and Stefan Wager.
\newblock High-dimensional asymptotics of prediction: Ridge regression and
  classification.
\newblock \emph{The Annals of Statistics}, 46\penalty0 (1):\penalty0 247--279,
  2018.

\bibitem[Donoho and Montanari(2016)]{montanari_donoho}
David Donoho and Andrea Montanari.
\newblock High dimensional robust m-estimation: Asymptotic variance via
  approximate message passing.
\newblock \emph{Probability Theory and Related Fields}, 166\penalty0
  (3-4):\penalty0 935--969, 2016.

\bibitem[Donoho et~al.(2014)Donoho, Gavish, et~al.]{donoho_smd}
David Donoho, Matan Gavish, et~al.
\newblock Minimax risk of matrix denoising by singular value thresholding.
\newblock \emph{The Annals of Statistics}, 42\penalty0 (6):\penalty0
  2413--2440, 2014.

\bibitem[Donoho et~al.(2018)Donoho, Gavish, and Johnstone]{johnstone}
David~L Donoho, Matan Gavish, and Iain~M Johnstone.
\newblock Optimal shrinkage of eigenvalues in the spiked covariance model.
\newblock \emph{Annals of statistics}, 46\penalty0 (4):\penalty0 1742, 2018.

\bibitem[Erd{\H{o}}s and Yau(2017)]{erdHos2017dynamical}
L{\'a}szl{\'o} Erd{\H{o}}s and Horng-Tzer Yau.
\newblock \emph{A dynamical approach to random matrix theory}, volume~28.
\newblock American Mathematical Soc., 2017.

\bibitem[Fan and Fan(2008)]{fan2008high}
Jianqing Fan and Yingying Fan.
\newblock High dimensional classification using features annealed independence
  rules.
\newblock \emph{Annals of statistics}, 36\penalty0 (6):\penalty0 2605, 2008.

\bibitem[Friedman(1989)]{friedman1989regularized}
Jerome~H Friedman.
\newblock Regularized discriminant analysis.
\newblock \emph{Journal of the American statistical association}, 84\penalty0
  (405):\penalty0 165--175, 1989.

\bibitem[Gavish and Donoho(2014)]{gavish2014optimal}
Matan Gavish and David~L Donoho.
\newblock The optimal hard threshold for singular values is
  $\frac{4}{\sqrt{3}}$.
\newblock \emph{IEEE Transactions on Information Theory}, 60\penalty0
  (8):\penalty0 5040--5053, 2014.

\bibitem[Golub et~al.(1999)Golub, Slonim, Tamayo, Huard, Gaasenbeek, Mesirov,
  Coller, Loh, Downing, Caligiuri, et~al.]{golub1999molecular}
Todd~R Golub, Donna~K Slonim, Pablo Tamayo, Christine Huard, Michelle
  Gaasenbeek, Jill~P Mesirov, Hilary Coller, Mignon~L Loh, James~R Downing,
  Mark~A Caligiuri, et~al.
\newblock Molecular classification of cancer: class discovery and class
  prediction by gene expression monitoring.
\newblock \emph{science}, 286\penalty0 (5439):\penalty0 531--537, 1999.

\bibitem[Hastie et~al.(2015)Hastie, Tibshirani, and
  Wainwright]{hastie_tibsh_wain}
Trevor Hastie, Robert Tibshirani, and Martin Wainwright.
\newblock \emph{Statistical learning with sparsity: the lasso and
  generalizations}.
\newblock Chapman and Hall/CRC, 2015.

\bibitem[Hastie et~al.(2019)Hastie, Montanari, Rosset, and
  Tibshirani]{hastie2019surprises}
Trevor Hastie, Andrea Montanari, Saharon Rosset, and Ryan~J Tibshirani.
\newblock Surprises in high-dimensional ridgeless least squares interpolation.
\newblock \emph{arXiv preprint arXiv:1903.08560}, 2019.

\bibitem[Janson et~al.(2017)Janson, Barber, and Candes]{janson2017eigenprism}
Lucas Janson, Rina~Foygel Barber, and Emmanuel Candes.
\newblock Eigenprism: inference for high dimensional signal-to-noise ratios.
\newblock \emph{Journal of the Royal Statistical Society: Series B (Statistical
  Methodology)}, 79\penalty0 (4):\penalty0 1037--1065, 2017.

\bibitem[Jing et~al.(2010)Jing, Pan, Shao, Zhou, et~al.]{jing2010nonparametric}
Bing-Yi Jing, Guangming Pan, Qi-Man Shao, Wang Zhou, et~al.
\newblock Nonparametric estimate of spectral density functions of sample
  covariance matrices: A first step.
\newblock \emph{The Annals of Statistics}, 38\penalty0 (6):\penalty0
  3724--3750, 2010.

\bibitem[Johnstone(2006)]{johnstone2006high}
Iain~M Johnstone.
\newblock High dimensional statistical inference and random matrices.
\newblock \emph{arXiv preprint math/0611589}, 2006.

\bibitem[Lam et~al.(2016)]{nercome}
Clifford Lam et~al.
\newblock Nonparametric eigenvalue-regularized precision or covariance matrix
  estimator.
\newblock \emph{The Annals of Statistics}, 44\penalty0 (3):\penalty0 928--953,
  2016.

\bibitem[Ledoit and P{\'e}ch{\'e}(2011)]{ledoit2011eigenvectors}
Olivier Ledoit and Sandrine P{\'e}ch{\'e}.
\newblock Eigenvectors of some large sample covariance matrix ensembles.
\newblock \emph{Probability Theory and Related Fields}, 151\penalty0
  (1-2):\penalty0 233--264, 2011.

\bibitem[Ledoit and Wolf(2004)]{lw_linear}
Olivier Ledoit and Michael Wolf.
\newblock A well-conditioned estimator for large-dimensional covariance
  matrices.
\newblock \emph{Journal of multivariate analysis}, 88\penalty0 (2):\penalty0
  365--411, 2004.

\bibitem[Ledoit and Wolf(2012)]{lw_2012}
Olivier Ledoit and Michael Wolf.
\newblock Nonlinear shrinkage estimation of large-dimensional covariance
  matrices.
\newblock \emph{The Annals of Statistics}, 40\penalty0 (2):\penalty0
  1024--1060, 2012.

\bibitem[Ledoit and Wolf(2017)]{lw_direct}
Olivier Ledoit and Michael Wolf.
\newblock Direct nonlinear shrinkage estimation of large-dimensional covariance
  matrices.
\newblock \emph{University of Zurich, Department of Economics, Working Paper},
  \penalty0 (264), 2017.

\bibitem[Mei and Montanari(2019)]{montanari_mei}
Song Mei and Andrea Montanari.
\newblock The generalization error of random features regression: Precise
  asymptotics and double descent curve.
\newblock \emph{arXiv preprint arXiv:1908.05355}, 2019.

\bibitem[Munkres(1991)]{munkres}
James~R. Munkres.
\newblock \emph{Analysis on manifolds}.
\newblock Addison-Wesley Publishing Company, Advanced Book Program, Redwood
  City, CA, 1991.
\newblock ISBN 0-201-51035-9.

\bibitem[Paul and Aue(2014)]{paul2014random}
Debashis Paul and Alexander Aue.
\newblock Random matrix theory in statistics: A review.
\newblock \emph{Journal of Statistical Planning and Inference}, 150:\penalty0
  1--29, 2014.

\bibitem[Raskutti et~al.(2014)Raskutti, Wainwright, and Yu]{raskutti2014early}
Garvesh Raskutti, Martin~J Wainwright, and Bin Yu.
\newblock Early stopping and non-parametric regression: an optimal
  data-dependent stopping rule.
\newblock \emph{The Journal of Machine Learning Research}, 15\penalty0
  (1):\penalty0 335--366, 2014.

\bibitem[Rudelson et~al.(2013)Rudelson, Vershynin,
  et~al.]{vershynin_hanson_wright}
Mark Rudelson, Roman Vershynin, et~al.
\newblock Hanson-wright inequality and sub-gaussian concentration.
\newblock \emph{Electronic Communications in Probability}, 18, 2013.

\bibitem[Silverstein and Bai(1995)]{silverstein1995empirical}
Jack~W Silverstein and ZD~Bai.
\newblock On the empirical distribution of eigenvalues of a class of large
  dimensional random matrices.
\newblock \emph{Journal of Multivariate analysis}, 54\penalty0 (2):\penalty0
  175--192, 1995.

\bibitem[Silverstein and Choi(1995)]{silver_choi}
Jack~W Silverstein and Sang-Il Choi.
\newblock Analysis of the limiting spectral distribution of large dimensional
  random matrices.
\newblock \emph{Journal of Multivariate Analysis}, 54\penalty0 (2):\penalty0
  295--309, 1995.

\bibitem[Su et~al.(2014)Su, Boyd, and Candes]{su2014differential}
Weijie Su, Stephen Boyd, and Emmanuel Candes.
\newblock A differential equation for modeling nesterov’s accelerated
  gradient method: Theory and insights.
\newblock In \emph{Advances in Neural Information Processing Systems}, pages
  2510--2518, 2014.

\bibitem[Sur and Cand{\`e}s(2019)]{candes_sur}
Pragya Sur and Emmanuel~J Cand{\`e}s.
\newblock A modern maximum-likelihood theory for high-dimensional logistic
  regression.
\newblock \emph{Proceedings of the National Academy of Sciences}, 116\penalty0
  (29):\penalty0 14516--14525, 2019.

\bibitem[Sur et~al.(2019)Sur, Chen, and Cand{\`e}s]{candes_sur_1}
Pragya Sur, Yuxin Chen, and Emmanuel~J Cand{\`e}s.
\newblock The likelihood ratio test in high-dimensional logistic regression is
  asymptotically a rescaled chi-square.
\newblock \emph{Probability Theory and Related Fields}, 175\penalty0
  (1-2):\penalty0 487--558, 2019.

\bibitem[Taheri et~al.(2020)Taheri, Pedarsani, and
  Thrampoulidis]{taheri2020sharp}
Hossein Taheri, Ramtin Pedarsani, and Christos Thrampoulidis.
\newblock Sharp asymptotics and optimal performance for inference in binary
  models.
\newblock \emph{arXiv preprint arXiv:2002.07284}, 2020.

\bibitem[Yao et~al.(2015)Yao, Zheng, and Bai]{bai_book}
Jianfeng Yao, Shurong Zheng, and Zhidong Bai.
\newblock \emph{Large sample covariance matrices and high-dimensional data
  analysis}, volume~39 of \emph{Cambridge Series in Statistical and
  Probabilistic Mathematics}.
\newblock Cambridge University Press, New York, 2015.
\newblock ISBN 978-1-107-06517-8.
\newblock \doi{10.1017/CBO9781107588080}.
\newblock URL \url{https://doi.org/10.1017/CBO9781107588080}.

\bibitem[Yao et~al.(2007)Yao, Rosasco, and Caponnetto]{yao2007early}
Yuan Yao, Lorenzo Rosasco, and Andrea Caponnetto.
\newblock On early stopping in gradient descent learning.
\newblock \emph{Constructive Approximation}, 26\penalty0 (2):\penalty0
  289--315, 2007.

\bibitem[Zhang(1997)]{zhang1997identification}
Michael~Q Zhang.
\newblock Identification of protein coding regions in the human genome by
  quadratic discriminant analysis.
\newblock \emph{Proceedings of the National Academy of Sciences}, 94\penalty0
  (2):\penalty0 565--568, 1997.

\end{thebibliography}

\end{document}